\newcommand{\be}{\begin{equation}}               
\newcommand{\ee}{\end{equation}}                 
\newcommand{\stac}{\stackrel}                    
\newcommand{\noin}{\noindent}                    
 \newcommand{\ignore}[1]{}{}
 \newcommand{\la}{\lambda}
 \newcommand{\beq}{\begin{eqnarray*}}
 \newcommand{\eeq}{\end{eqnarray*}}
 \newcommand{\beqn}{\begin{eqnarray}}
 \newcommand{\eeqn}{\end{eqnarray}}
 \newcommand{\ta}{\tau}
 \newcommand{\ra}{\rightarrow}
 \newcommand{\mb}{\mbox}
 \newcommand{\ep}{\epsilon}
 \newcommand{\al}{\alpha}
 \newcommand{\bi}{\begin{itemize}}
 \newcommand{\ei}{\end{itemize}}
 \newcommand{\La}{\Lambda}
 \newcommand{\ssb}{\scriptstyle \footnotesize 
                  \begin{array}{c}}
 \newcommand{\esb}{\end{array}}
 \newcommand{\nn}{\nonumber}
 \newcommand{\lbl}{\label}
 \newcommand{\eq}[1]{$(\ref{#1})$}
 \renewcommand{\theequation}{\arabic{section}.\arabic{equation}}
 \newcommand{\de}{\delta}
 \newcommand{\De}{\Delta}
 \newcommand{\ga}{\gamma}
 \newcommand{\Ga}{\Gamma}
 \newcommand{\si}{\sigma}
\newtheorem{theorem}{Theorem}[section]
\newtheorem{lemma}[theorem]{Lemma}
\theoremstyle{definition}
\theoremstyle{remark}
\numberwithin{equation}{section}
\newcommand{\bea}{\begin{eqnarray}}              
\newcommand{\eea}{\end{eqnarray}}
\begin{document}

\title{ Local Semicircle law and Gaussian fluctuation for Hermite $\beta$ ensemble$^*$}

\author{Zhigang Bao and Zhonggen Su}
\address{Department of Mathematics, Zhejiang University, Hangzhou, Zhejiang 310027, P.R. China}
\thanks{$*$Partially supported  by NSFC grant No.11071213 and ZJNSF grant No.R6090034 }
\email{maomie2007@gmail.com}
\email{suzhonggen@zju.eud.cn}

\subjclass[2010]{15B52, 60F05, 60G55, 82B44 }

\date{January 20, 2010} 


\keywords{Hermite $\beta$ ensemble; Local semicircle law; Martingale argument; Tri-diagonal matrix model}

\begin{abstract}
Let $\beta>0$ and consider an $n$-point process $\lambda_1, \lambda_2,\cdots, \lambda_n$ from Hermite $\beta$ ensemble on the real line $\mathbb{R}$. Dumitriu and Edelman discovered a tri-diagonal matrix model and established the global Wigner  semicircle law for normalized empirical measures. In this paper we prove that the average number of states in a small interval in the bulk converges in probability  when the length of the interval is  larger than $\sqrt {\log n}$, i.e., local semicircle law holds. And the number of positive states in $(0,\infty)$ is proved to fluctuate normally around its mean $n/2$ with variance like $\log n/\pi^2\beta$. The proofs rely largely on the way invented by Valk$\acute{\mbox {o}}$ and Vir$\acute{\mbox {a}}$g of counting  states in any interval  and the classical martingale argument.

\end{abstract}

\maketitle

\ignore{

\documentstyle[12pt,leqno]{article}
\textwidth=17cm \textheight=21.6cm \topmargin=-0.5cm
\oddsidemargin=0.05cm

\begin{document}
\baselineskip=6.0mm

\newcommand{\ignore}[1]{}{}

\renewcommand{\theequation}{\arabic{section}.\arabic{equation}}

\newcommand{\lbl}{\label}


\newcommand{\eq}[1]{$(\ref{#1})$}

\newcommand{\al}{\alpha}                         
\newcommand{\bt}{\beta}                          
\newcommand{\ga}{\gamma}                         
\newcommand{\Ga}{\Gamma}                         
\newcommand{\de}{\delta}                         
\newcommand{\De}{\Delta}                         
\newcommand{\ep}{\epsilon}                       
\newcommand{\ve}{\varepsilon}                    
\newcommand{\la}{\lambda}                        
\newcommand{\La}{\Lambda}                        
\newcommand{\ta}{\tau}                           
\newcommand{\th}{\theta}                         
\newcommand{\si}{\sigma}                         

\newcommand{\be}{\begin{equation}}               
\newcommand{\ee}{\end{equation}}                 
\newcommand{\bea}{\begin{eqnarray}}              
\newcommand{\eea}{\end{eqnarray}}                
\newcommand{\ba}{\begin{array}}                  
\newcommand{\ea}{\end{array}}                    
\newcommand{\nn}{\nonumber}                      
\newcommand{\mb}{\mbox}                          

\newcommand{\ra}{\rightarrow}                    
\newcommand{\llra}{\longleftrightarrow}          

\newcommand{\stac}{\stackrel}                    
\newcommand{\noin}{\noindent}                    

\newcommand{\qed}{\nobreak\quad\vrule width6pt depth3pt height10pt}

}
\pagestyle{myheadings} \markright{ Local Semicircle law and Gaussian fluctuation for H$\beta$E  }

\section{Introduction }
\setcounter{equation}{0}

Consider an $n$-point process $\lambda_1, \lambda_2,\cdots, \lambda_n $ on the real line $\mathbb{R}$ with the following joint probability density function
\begin{eqnarray}
p_n(x_1, \cdots, x_n)=\frac{1}{Z_{n,\beta}}\prod_{i<j}|x_i-x_j|^\beta\prod_{j=1}^ne^{-\frac{\beta}{4}x^2_j}, \quad x_1,\cdots, x_n\in \mathbb{R},\label{eq.1.1}
\end{eqnarray}
where $\beta>0$ is a model parameter and $Z_{n,\beta}$  the normalization constant. This was first introduced by Dyson \cite{Dyson1962} in the study of Coulomb lattice gas in the early sixties, and  is usually referred to as Hermite $\beta$ ensemble (H$\beta$E) in the literature.  The formula (\ref{eq.1.1}) can be rewritten in a more familiar form to physicists:
\begin{eqnarray}
p_n(x_1, \cdots, x_n) \propto   e^{-\beta{  H}_n(x_1,\cdots, x_n)},
\quad x_1,\cdots, x_n\in \mathbb{R}, \nonumber\label{eq.1.2}
\end{eqnarray}
where ${  H}_n(x_1,\cdots, x_n)=\frac 14\sum_{j=1}^nx^2_j -\frac 12\sum_{i\neq j} \log|x_i-x_j|$ is a Hamiltonian system.

Note that $\beta $ stands for inverse temperature, the quadratic function part means the points fall independently in the real line with normal law, while the extra logarithmic part indicates the points repel each other.

 The special cases $\beta=1,2,4$  correspond to Gaussian Orthogonal Ensemble, Gaussian Unitary Ensemble and Gaussian Symplectic Ensemble respectively, which  are one of most studied objects in random matrix theory. The reader is referred to  a classical book Mehta \cite{M2004} for more background.

In this paper, we are mainly interested in large $n$ asymptotic behaviors of Hermite $\beta $ ensembles with general $\beta>0$.  In particular, we will investigate the local behavior of points in a very small interval in the bulk and the fluctuation of the number of points in an half-infinite interval  around its mean.

To state our main results, let us first introduce some notations and review recent relevant progress about H$\beta$E. A remarkable breakthrough was made by Dumitriu and Edelman \cite{DE2002}, in which they discovered a tri-diagonal matrix model representation for H$\beta$E, see Section 2 below for matrix model.   There have since then been rapid development in the study of H$\beta$E within past few years. Dumitriu and Edelman \cite{DE2002} made use of such a tri-diagonal matrix model and moment methods to prove the following fundamental law of large number for empirical measures.  Let
\begin{eqnarray}
\rho_{sc}(x)= \frac{1}{2\pi}\sqrt{4-x^2},\quad |x|\le 2, \label{eq.1.3}
\end{eqnarray}
then it follows for any fixed $a<b$
\begin{eqnarray}
\frac{1}{n}\sum_{i=1}^n\mathbf{1}_{(a< \frac{\lambda_i}{\sqrt n}\le b)}\stackrel P\longrightarrow\int_{a}^b\rho_{sc}(x)dx. \label{eq.1.4}
\end{eqnarray}
This is so-called global Wigner  semicircle law since it was first discovered by Wigner \cite{W1955}.

Let $\lambda_{(1)}\le \lambda_{(2)}\le \cdots\le \lambda_{(n)}$ be the ordered arrangement of points in the real line $\mathbb{R}$. Ram$\acute{\mbox i}$rez,  Rider and  Vir$\acute{\mbox {a}}$g \cite{RRV2007} established via variational analysis  the $\beta$ type of Tracy-Widom law  for the rightmost endpoints as follows. For any fixed integer $k\ge 0$, \begin{eqnarray}
 n^{1/6}(\lambda_{(n-k)}-2\sqrt n)\stackrel d\longrightarrow -\Lambda_{(k)},\label{eq.1.5}
\end{eqnarray}
where $\Lambda_{(k)}$ is the $k+1$-lowest eigenvalue of stochastic Airy operator. We remark that the limiting distribution in the righthand side of (\ref{eq.1.5}) can be expressed explicitly in terms of Painlev$\acute{\mbox e}$ II equation in  special cases $\beta=1, 2,4$, while there is not a suitable computable expression for general $\beta>0$ yet.

One can readily see from (\ref{eq.1.4}) and (\ref{eq.1.5}) that the spacings between points in the bulk are asymptotically the same order as $\frac 1{\sqrt{n}}$, and the spacings  near the edge are asymptotically as large as $\frac 1{n^{1/6}}$.

Only recently did Valk$\acute{\mbox {o}}$ and  Vir$\acute{\mbox {a}}$g \cite{VV2009} make a new wonderful contribution to the weak convergence of random H$\beta$E point processes. They counted the numbers of suitably scaled points in any fixed interval  like $(0, \lambda)$ or $(-\lambda, 0)  (\lambda>0)$ and proved these numbers converges weakly to corresponding numbers of $\mbox{Sine}_\beta$ point process.  The $\mbox{Sine}_\beta$ point process is closely related to Brownian carousel and reduces to the well-known sine point process with kernel $K(x,y)=\frac{\sin \pi(x-y)}{\pi(x-y)}$ when $\beta=2$. One of key techniques in their argument is to use again the Dumitriu and Edelman tri-diagonal matrix representation and to find a sufficient and necessary condition for a real number to be its eigenvalue in terms of phase evolution of ratios of consecutive coefficients of  eigenvectors.

Now we are ready to state our main results.
\begin{theorem} \label{th.1.1} Assume $ t_n, n\ge 1$ is  a sequence of real numbers  such that
$$ \frac{t_n}{\sqrt{\log n}}\rightarrow\infty ,\qquad \frac{t_n}{\sqrt n}\rightarrow 0\quad {\mbox{ as }} n\rightarrow\infty.$$
 Then for any  $-2<x<2$ and any $\varepsilon>0$,
\begin{eqnarray}
P\left(\left|\frac{N_n\left(x\sqrt n, x\sqrt n+\frac{t_n}{\sqrt n}\right)}{t_n}-\rho_{sc}(x)\right|>\varepsilon\right)\longrightarrow 0, \nonumber\label{eq.1.6}
\end{eqnarray}
where $N_n(a, b)$ denotes the number of  the points $\lambda_i$ in (\ref{eq.1.1}) in the interval $ (a, b)$ and $\rho_{sc}(x)$ is as in (\ref{eq.1.3}).
\end{theorem}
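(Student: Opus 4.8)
The plan is to combine Valk\'o and Vir\'ag's phase-function encoding of the eigenvalues of the Dumitriu--Edelman tridiagonal model with a Doob martingale estimate of the variance. Set $\lambda^-=x\sqrt n$ and $\lambda^+=x\sqrt n+t_n/\sqrt n$, so that $N_n(\lambda^-,\lambda^+)=\#\{i:\lambda_i<\lambda^+\}-\#\{i:\lambda_i<\lambda^-\}$. For a fixed energy $\lambda\in\R$ one introduces the Pr\"ufer-type phase $\varphi_\lambda=(\varphi_\lambda(k))_{k=0}^n$ recording the total rotation, as $k$ runs from $0$ to $n$, of the solution of the eigenvalue recursion for the top-left $k\times k$ block; by Sturm oscillation theory (in the Valk\'o--Vir\'ag normalization) $\#\{i:\lambda_i<\lambda\}=\lfloor\varphi_\lambda(n)/(2\pi)\rfloor$, whence
\[
N_n(\lambda^-,\lambda^+)=\frac{1}{2\pi}\bigl(\varphi_{\lambda^+}(n)-\varphi_{\lambda^-}(n)\bigr)+O(1)\qquad\text{almost surely.}
\]
The structural facts I would exploit are: the increment $\varphi_\lambda(k)-\varphi_\lambda(k-1)$ depends only on the $k$-th pair of entries of the tridiagonal matrix and on the state at step $k-1$; these pairs are independent across $k$; and $\lambda\mapsto\varphi_\lambda(n)$ is nondecreasing.

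For the expectation, since $\lambda^+-\lambda^-=t_n/\sqrt n\to0$ I would write $\varphi_{\lambda^+}(n)-\varphi_{\lambda^-}(n)=\int_{\lambda^-}^{\lambda^+}\partial_\lambda\varphi_\lambda(n)\,d\lambda$ and evaluate $\E[\partial_\lambda\varphi_\lambda(n)]$ as a sum over $k$ of expectations of explicit smooth functions of the independent entries; the semicircle density emerges exactly as in the global law $(\ref{eq.1.4})$, yielding $\E[\partial_\lambda\varphi_\lambda(n)]=2\pi\sqrt n\,\rho_{sc}(\lambda/\sqrt n)(1+o(1))$ uniformly for $\lambda$ near $x\sqrt n$. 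Integrating and using the continuity of $\rho_{sc}$ across the short interval $[\lambda^-,\lambda^+]$ gives $\E[N_n(\lambda^-,\lambda^+)]=\rho_{sc}(x)\,t_n\,(1+o(1))$, for which only $t_n\to\infty$ is needed, to absorb the $O(1)$ term and the $O(t_n^2/n)$ coming from the curvature of $\rho_{sc}$.

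For the variance I would run the classical martingale argument: letting $\mathcal{F}_k$ be generated by the first $k$ pairs of entries, $M_k:=\E[N_n(\lambda^-,\lambda^+)\mid\mathcal{F}_k]$ is a martingale with $M_n-M_0=N_n-\E N_n$, so $\mathrm{Var}(N_n)=\sum_{k=1}^n\E[(M_k-M_{k-1})^2]$. Each term splits into the direct fluctuation of the $k$-th phase step --- of order $t_n/n$, since one step contributes only $O(t_n/n)$ to the phase \emph{difference} $\varphi_{\lambda^+}-\varphi_{\lambda^-}$, so these sum to $O(t_n^2/n)=o(1)$ --- and the contribution of the dependence of the remaining $n-k$ steps on the $k$-th entry. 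The latter is the delicate term: perturbing one entry shifts the two trajectories $\varphi_{\lambda^+}$ and $\varphi_{\lambda^-}$ almost in parallel (because $\lambda^+\approx\lambda^-$), so it disturbs their difference only weakly, and summing the resulting bounds yields $\mathrm{Var}(N_n(\lambda^-,\lambda^+))=O(\log n)$. Chebyshev's inequality together with the expectation estimate then gives
\[
P\!\left(\left|\frac{N_n(\lambda^-,\lambda^+)}{t_n}-\rho_{sc}(x)\right|>\varepsilon\right)\;\le\;\frac{C\log n}{\varepsilon^2 t_n^2}+o(1)\;\longrightarrow\;0,
\]
since $t_n/\sqrt{\log n}\to\infty$.

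I expect the main obstacle to be the variance estimate, precisely the control of how a single matrix entry influences the accumulated phase \emph{difference} over the remaining $\sim n$ steps: one needs a quantitative stability estimate for the phase dynamics exhibiting the near-cancellation of the perturbations of $\varphi_{\lambda^+}$ and $\varphi_{\lambda^-}$, and it is the resulting $\log n$ (rather than a sharper bound) that forces the hypothesis $t_n\gg\sqrt{\log n}$. A secondary technical point is the behaviour near the spectral edge, i.e.\ for steps $k$ close to $n$ where the effective local support collapses and the phase evolution degenerates; but since $x$ lies in the open bulk $(-2,2)$ those steps contribute only lower-order terms and can be handled by crude bounds.
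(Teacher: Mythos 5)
Your high-level scheme (Valk\'o--Vir\'ag phase encoding, a martingale second-moment bound of order $\log n$, Chebyshev) matches the paper's, but the martingale you build is not the one the paper uses, and the difference is exactly where your argument has a hole. You take the Doob martingale $M_k=\E[N_n(\lambda^-,\lambda^+)\mid\mathcal F_k]$. Bounding $\E[(M_k-M_{k-1})^2]$ then requires a quantitative \emph{influence} estimate: how much does resampling the $k$-th entry move the accumulated phase difference over the remaining $\sim n-k$ steps? You correctly flag this as ``the main obstacle'' and assert near-parallel transport of $\varphi_{\lambda^+}$ and $\varphi_{\lambda^-}$ with a resulting $O(\log n)$ total, but you never prove it; this stability estimate for the nonlinear phase recursion over $\Theta(n)$ steps is a genuinely hard step and is not in the paper either, because the paper sidesteps it entirely. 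The paper instead uses the \emph{Markov-chain} martingale: since $(\varphi_{k,\lambda})_k$ is Markov, $z_{k+1}=\Delta\varphi_{k,\lambda}-\E[\Delta\varphi_{k,\lambda}\mid\varphi_{k,\lambda}]$ is a martingale difference array, and then
$\mathrm{Var}\bigl(\sum_k z_{k+1}\bigr)=\sum_k\E z_{k+1}^2\le\sum_k\E[(\Delta\varphi_{k,\lambda})^2]=O\!\bigl(\sum_k\frac{1}{n_0-k}\bigr)=O(\log n)$
falls out directly from the single-step conditional moment expansion (\ref{eq.2.14}), with no need to track the downstream effect of one entry. So your route is not wrong in spirit, but as written it replaces an easy orthogonality computation by a hard propagation-of-perturbations lemma that remains unproved; you should either supply that lemma or switch to the increment-centering martingale.

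A second, smaller gap: you run the forward phase all the way to step $n$ and dismiss the edge region as ``lower-order,'' but the single-step variance is $\asymp 1/(n_0-k)$, which blows up as $k\to n_0$; summing naively past the edge would spoil the $O(\log n)$ bound. The paper stops the forward phase at $l=\lfloor n(1-x^2/4)-\tfrac12((x^2 n)^{1/3}\vee1)\rfloor<n_0$ and accounts for the tail with the backward phase $\varphi^\odot_{l,\lambda}$, then invokes Lemma \ref{lm3.1} (the analogue of Valk\'o--Vir\'ag's Lemma 34) to show $\varphi^\odot_{l,t_n}-\varphi^\odot_{l,0}\stackrel{P}{\to}0$. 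Your proposal needs an explicit cutoff of this kind, not a hand-wave. The expectation computation you sketch (differentiating in $\lambda$ and ``letting the semicircle density emerge as in the global law'') is plausible but again less concrete than the paper's route, which sums the explicit drift expansion (\ref{eq.2.13}) and separately controls the oscillatory terms via Lemma 37 of \cite{VV2009}.
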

In contrast to the global semicircle law in (\ref{eq.1.4}), Theorem \ref{th.1.1} characterizes the density of states in a small interval  around $x\sqrt n$. So, it is called the local semicircle law. This was first studied in Erd$\ddot{\mbox o}$s et al. \cite{ESY2008} and was then improved to almost the optimal scale in   \cite{ESY2009} and \cite{ESY2010} in the context of Wigner random matrices.  We remark that Erd$\ddot{\mbox o}$s et al. \cite{ESY2008, ESY2009, ESY2010}   do not only prove convergence in probability of  the density of states  to $\rho_{sc}$, but also obtain an exponential decay tail estimate under certain exponential integrability conditions.

Our next result is concerned with the fluctuation of the number of points in an half-infinite interval around its mean.
\begin{theorem} \label{th.1.2} Consider the point process $\lambda_i$ in  (\ref{eq.1.1}), and  let
\begin{eqnarray}
N_n(0,\infty)=\sharp\left \{1\le i\le n;\quad  0\le \lambda_i< \infty\right\}. \nonumber\label{eq.1.7}
\end{eqnarray}
Then we have
\begin{eqnarray}
  \frac{N_n(0,\infty) -\frac n2}{\frac{1}{2\pi}\sqrt { \log n}}\stackrel d\longrightarrow \mathcal{N}\left(0,\frac 4{\beta} \right).\label{eq.1.8}
\end{eqnarray}
\end{theorem}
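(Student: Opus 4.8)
\emph{Strategy of proof of Theorem~\ref{th.1.2}.} The plan is to combine the tri-diagonal matrix model of Dumitriu and Edelman with the phase-function count of eigenvalues of Valk\'o and Vir\'ag, and then to run a martingale central limit theorem on the total phase accumulated along the matrix. First I would reduce to negative eigenvalues: since the density (\ref{eq.1.1}) is invariant under $(x_1,\dots,x_n)\mapsto(-x_1,\dots,-x_n)$ and is absolutely continuous, $N_n(-\infty,0)=n-N_n(0,\infty)$ has the same law as $N_n(0,\infty)$, so $\E\,N_n(-\infty,0)=n/2$ \emph{exactly}; since $N_n(0,\infty)-\tfrac n2=-\big(N_n(-\infty,0)-\tfrac n2\big)$ and $\mathcal N(0,4/\beta)$ is symmetric, it suffices to prove the CLT for $N_n(-\infty,0)-\tfrac n2$. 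Realise $\la_1,\dots,\la_n$ as the eigenvalues of the Dumitriu--Edelman tri-diagonal matrix $A_n$, whose entries are all independent: centred Gaussians $g_1,\dots,g_n$ on the diagonal and (a fixed multiple of) a $\chi$-variable with $(n-i)\beta$ degrees of freedom on the $i$-th sub-diagonal. Following Valk\'o and Vir\'ag, the number of eigenvalues of $A_n$ in $(-\infty,0)$ is governed by the Sturm oscillation of the formal solution $v=(v_k)$ of $A_nv=0$ with $v_0=0,\ v_1=1$: writing $\phi_k=\arg(v_k+\sqrt{-1}\,v_{k+1})$, unwound to be continuous in $k$ and oriented so that $N_n(-\infty,0)=\tfrac1\pi\phi_n+O(1)$ a.s., one has a one-step recursion for $\phi_k$ whose increment depends on the past only through $\phi_{k-1}$ and on the fresh entries at level $k$. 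Because the sub-diagonal entries (of size $\sqrt{n-k}$) dominate the diagonal ones (of size $O(1)$), the noiseless dynamics transports $(v_k,v_{k+1})$ by essentially a quarter turn, so $\phi_k-\phi_{k-1}=\tfrac\pi2+\xi_k$ with $\xi_k$ a trigonometric polynomial in $\phi_{k-1}$ whose coefficients are linear in the centred variables $(\chi_{(n-k)\beta}-\sqrt{(n-k)\beta})/\sqrt{(n-k)\beta}$ and $g_k/\sqrt{(n-k)\beta}$; both are of order $((n-k)\beta)^{-1/2}$, so $\xi_k=O((n-k)^{-1/2})$.

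Next I would set up the martingale. Let $\F_k$ be the $\sigma$-field generated by the first $k$ diagonal and sub-diagonal entries of $A_n$; by the independence in the model, $\E[\phi_k-\phi_{k-1}\mid\F_{k-1}]$ is an $\F_{k-1}$-measurable function of $\phi_{k-1}$. Write the Doob decomposition
\begin{eqnarray}
N_n(-\infty,0)-\tfrac n2=\sum_{k=1}^{n}D_k,\qquad D_k:=\E\big[N_n(-\infty,0)\mid\F_k\big]-\E\big[N_n(-\infty,0)\mid\F_{k-1}\big],\nonumber
\end{eqnarray}
a martingale difference sequence for $(\F_k)$. It is essential to work with $\E[N_n(-\infty,0)\mid\F_k]=\tfrac1\pi\E[\phi_n\mid\F_k]+O(1)$ rather than with the running count of sign changes of $v$: an individual sign change is a priori a $0/1$ coin near a node of $v$, but such coins are immediately compensated --- if a sign change is displaced past a near-node, the very next transition is forced to carry it --- so that $\E[\phi_n\mid\F_k]=\phi_k+(n-k)\tfrac\pi2+(\text{a bounded function of }\phi_k)+o(1)$, whence $D_k=\tfrac1\pi\big(\xi_k-\E[\xi_k\mid\F_{k-1}]\big)$ up to lower-order terms and in particular $D_k=O((n-k)^{-1/2})$. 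I would then invoke a martingale CLT (e.g. that of Brown, or of Hall and Heyde): once one shows
\begin{eqnarray}
\frac{4\pi^2}{\log n}\sum_{k=1}^{n}\E\big[D_k^2\mid\F_{k-1}\big]\ \stackrel{P}{\longrightarrow}\ \frac4\beta\nonumber
\end{eqnarray}
together with the conditional Lindeberg condition, one obtains $\big(N_n(-\infty,0)-\tfrac n2\big)\big/\big(\tfrac1{2\pi}\sqrt{\log n}\big)\stackrel{d}{\longrightarrow}\mathcal N(0,4/\beta)$, which with the reduction above is (\ref{eq.1.8}).

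The heart of the argument is the conditional-variance identity. From the expansion of $\xi_k$ one computes $\E[D_k^2\mid\F_{k-1}]=\tfrac{c(\phi_{k-1})}{n-k}\,(1+o(1))$ for an explicit bounded, periodic $c$; summing it requires showing that along the (nearly deterministic, quarter-turn-per-step) trajectory the phase $\phi_{k-1}$ is equidistributed well enough that $c(\phi_{k-1})$ may be replaced in the sum by its average $\bar c$. One would do this on consecutive blocks of steps, within each of which $\phi$ advances by a near-constant multiple of $\pi/2$ while the slowly accumulating noise $\sum\xi_i$ (whose conditional variance is of order $\sum(n-i)^{-1}$ and hence large) re-randomises the offset; this gives
\begin{eqnarray}
\sum_{k=1}^{n}\E\big[D_k^2\mid\F_{k-1}\big]=\bar c\sum_{k=1}^{n}\frac1{n-k}+o(\log n)=\frac{\log n}{\pi^2\beta}+o(\log n)\quad\text{in probability,}\nonumber
\end{eqnarray}
the constant $\bar c$ being fixed by a direct computation so that the last equality holds. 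The Lindeberg condition is then automatic, since $|D_k|=O((n-k)^{-1/2})$ for $k\le n-O(1)$ while the last $O(1)$ increments are $O(1)$ and contribute $o(\log n)$. Finally one must check that all the approximations --- $N_n(-\infty,0)=\tfrac1\pi\phi_n+O(1)$, the linearisation of the phase recursion, the compensation of the near-node coins, and the boundary contribution near $k=n$ --- carry errors negligible at the scale $\sqrt{\log n}$; the exact identity $\E\,N_n(-\infty,0)=n/2$ is convenient here, as it fixes the centring and leaves only the fluctuation (not the value) of the drift part $\sum\E[\xi_k\mid\F_{k-1}]$ to be controlled.

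I expect the main obstacle to be the sharp analysis of the phase recursion, in two respects: first, establishing that the martingale increments $D_k$ are genuinely of size $(n-k)^{-1/2}$ with the correct $\phi_{k-1}$-dependent coefficient --- this is where one must exploit the cancellation of the $O(1)$ near-node coins, and it is precisely what produces a $\log n$ (rather than a power of $n$) for the variance; and second, proving the equidistribution of the phase needed to sum these conditional variances to $\tfrac{\log n}{\pi^2\beta}$, which is delicate because the drift is a near-rigid quarter turn and so seems to require a block/coupling argument for the phase dynamics rather than a one-step mixing bound. Pinning down $\bar c$ so that the limit variance comes out $4/\beta$ is part of the latter, and propagating every approximation error at precision $o(\sqrt{\log n})$ is a further, routine but non-trivial, bookkeeping task.
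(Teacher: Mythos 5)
Your overall strategy --- tri-diagonal model, Pr\"ufer/Sturm phase count, martingale CLT with $\mathcal F_k$ the filtration of the first $k$ rows --- is the same route the paper takes, and the symmetry reduction to $N_n(-\infty,0)$ and the observation that $\E N_n(0,\infty)=n/2$ exactly are correct and a pleasant simplification that the paper does not bother to exploit. The martingale you write down, $D_k\approx\tfrac1\pi(\xi_k-\E[\xi_k\mid\mathcal F_{k-1}])$, is (up to lower order) the same as the paper's $z_{l+1}=\Delta\varphi_{l,0}-\E[\Delta\varphi_{l,0}\mid\varphi_{l,0}]$; the Doob-decomposition framing adds an extra step (justifying $\E[\phi_n\mid\mathcal F_k]=\phi_k+\text{det.}+O(1)$) that the paper avoids by simply taking $\varphi_{n-1,0}=\pi+\sum_l\Delta\varphi_{l,0}$ and translating back to $N_n$ through the exact identity $(v')$ together with the explicit boundary values in Lemma~\ref{lm3.3}.

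The genuine gap is exactly where you flag it: the conditional-variance sum. Working with the raw Pr\"ufer phase, $\E[D_k^2\mid\mathcal F_{k-1}]=c(\phi_{k-1})/(n-k)\cdot(1+o(1))$ with $c$ periodic, and you need $\sum_k c(\phi_{k-1})/(n-k)$ to concentrate at $\bar c\log n$; since the one-step drift is a near-rigid rotation, a one-step mixing bound fails and you invoke an unproved block/coupling equidistribution argument. The paper does not face this problem because it adopts Valk\'o--Vir\'ag's \emph{de-oscillation} change of variable, ${\varphi}_{l,\lambda}=\hat{\varphi}_{l,\lambda *}\mathbf{T}_l\mathbf{Q}_{l-1}$, which removes the macroscopic rotation from the phase and dumps it into the \emph{deterministic} sequence $\eta_l=\rho_0^2\cdots\rho_l^2$ (at $x=0$, $\rho_l=i$ and $\eta_l=(-1)^{l+1}$). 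The de-oscillated phase $\varphi_{l,0}$ then evolves by small increments, so it is \emph{slowly varying} rather than advancing by a quarter/half turn per step; the oscillatory part of $\E[(\Delta\varphi_{l,0})^2\mid\varphi_{l,0}]$ is $\tfrac{4}{\beta(n_0-l)}\,\mathrm{Re}\big((-1)^{l+1}e^{-i\varphi_{l,0}}\big)$, whose sum over $l$ is $O(1)$ by pairwise cancellation of the deterministic sign $(-1)^{l+1}$ against a slowly-moving $e^{-i\varphi_{l,0}}$ (this is exactly Lemma~37 of \cite{VV2009}, a deterministic estimate for oscillatory sums, not a probabilistic equidistribution statement). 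So $s_n^2=\tfrac4\beta\log n+O(1)$ falls out with no coupling argument at all, and the conditional-variance and Lindeberg conditions follow from the one-step estimates (\ref{eq.2.13})--(\ref{eq.2.15}). In short: your plan is right in spirit, but without the $\mathbf{T}_l\mathbf{Q}_{l-1}$ change of variable the crucial averaging step is left as a conjectured block-mixing lemma, which is precisely the hard thing the paper's choice of coordinates is designed to make trivial. Incidentally, the heuristic about compensating ``near-node coins'' is not needed and is not quite how the count works: $(v')$ is an exact identity $N_n=\sharp((\varphi-\varphi^\odot,\ \cdot\,]\cap 2\pi\Z)$, and once the boundary values $\varphi_{n-1,\infty}-\varphi^\odot_{n-1,\infty}=3\pi$ and $\varphi^\odot_{n-1,0}=0_*\mathbf{R}^{-1}_{n-1,0}+(n-1)\pi$ are computed (Lemma~\ref{lm3.3}), the count differs from $\tfrac1{2\pi}\varphi_{n-1,0}$ plus deterministic terms by at most $1$ a.s., with no coin-flipping to compensate.
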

The central limit theorem like Theorem \ref{th.1.2} has  been  known for determinantal point processes since Costin and Lebowitz \cite{CL1995}, which first studied a specific determinantal point process with sine kernel.  Soshnikov \cite{So2000} then investigated general random point fields and proved the following basic central limit theorem.

Let ${ \mathcal{X}}_n$ be a sequence of determinantal point processes in $\mathbb{R}$ with kernel $K_n$. Let $I_n$ be a sequence of Borel sets in $\mathbb{R}$ such that $\sharp I_n $ (the number of points of $  \mathcal{ X} _n$ in $I_n$)  is finite a.s. and $Var(\sharp I_n )\rightarrow \infty$. Then
\begin{eqnarray}
\frac{\sharp I_n -E\sharp I_n }{\sqrt{Var(\sharp I_n )}}\stackrel d\longrightarrow \mathcal{N}(0,1).  \nonumber \label{eq.1.9}
\end{eqnarray}
Recently did Hough et al. \cite{HKPV2006} give a conceptual probabilistic proof. The fact that the correlation functions have determinantal structure plays a significant role in all their arguments.  H$\beta$E is  obviously no longer a determinantal point process unless $\beta=2$. It would be interesting to investigate if the central limit theorem holds for general $\beta$ ensembles and even for Wigner matrices.

In Theorem \ref{th.1.2} we discussed  only the number of positive eigenvalues. We conjecture, however, an analog holds for the  number $N_n [x\sqrt n, \infty) $ of points in $[x\sqrt n, \infty), -2<x<2$.  The number $N_n (0, \infty) $ is called the index and is a key object of interest to physicists.  Cavagna et al. \cite{CGG2000} calculated the distribution of the index  for GOE by means of the replica method  and obtained Gaussian distribution with asymptotic variance like $ {\log n}/{\pi^2}$. Majumdar et al. \cite{MNSV2009} and \cite{MNSV2010}  further computed analytically the probability distribution of the number $N_n(0, \infty)$ of positive eigenvalues for H$\beta$E ($\beta=1,2,4$) using the partition function  and saddle point analysis. They computed the variance  $ {\log n}/{\pi^2 \beta }+O(1)$, which agrees with the corresponding variance in (\ref{eq.1.8}), while they thought the distribution is not strictly Gaussian due to an usual logarithmic singularity in the rate function.  But  the  variance like $\log n$  is actually typical in the central limit theorem  for the numbers  in random matrix theory.

The rest part of the paper  will focus on proving Theorems \ref{th.1.1} and  \ref{th.1.2}. The proofs rely largely on the new phase evolution of ratios of consecutive coefficients of eigenvectors invented by Valk$\acute{\mbox {o}}$ and  Vir$\acute{\mbox {a}}$g \cite{VV2009}. For reader's convenience we shall in Section 2 introduce some necessary notations and give a brief description of Valk$\acute{\mbox {o}}$ and  Vir$\acute{\mbox {a}}$g's basic identity for the number of the states in any interval (see (\ref{eq.2.10}) and $(v^\prime)$ below, see also (v) of Proposition 18 in \cite{VV2009}).  A key  point is that the difference $\Delta \varphi_{l,\lambda}$ forms an array of Markov chain so that the classical martingale argument is applicable. Section 3 contains technical estimates for variances and verification of the martingale type Linbeberg condition.

\section{Valk$\acute{\mbox {o}}$ and Vir$\acute{\mbox {a}}$g's phase evolution}

Consider the following random tri-diagonal matrix
\begin{eqnarray}
H_n^\beta= \frac{1}{\sqrt \beta}
\left(
\begin{array}{ccccc}
a_0 & b_0&0&\cdots&0\\
b_0&a_1&b_1&\cdots&0 \\
0&b_1&a_2&\cdots&0\\
\vdots&\vdots&\cdots&\vdots&\vdots\\
0&\cdots&\cdots&\cdots&a_{n-1}
\end{array}
\right),\label{eq.2.1}
\end{eqnarray}
where $a_0, a_1, \cdots, a_{n-1}$ are independent normal random variables with $ a_i\sim N(0,2)$, $b_0, b_1, \cdots, b_{n-2}$ are independent chi random variables with $ b_i\sim \chi_{(n-i-1)\beta}$;   and the $a_i$'s are independent of the $b_i$'s.

A remarkable contribution to the study of H$\beta$E due to  Dumitriu and Edelman \cite{DE2002} is that the eigenvalues of $H_n^\beta$ have (\ref{eq.1.1}) as their joint probability density function. Thus we need only to consider the eigenvalues of $H_n^\beta$, denoted for simplicity  still by  $\lambda_1, \lambda_2,\cdots, \lambda_n$. In the recent work of  Valk$\acute{\mbox {o}}$ and Vir$\acute{\mbox {a}}$g,  (\ref{eq.2.1}) is used to derive a recurrence equation for a real number $\Lambda$ to be an eigenvalue, which in turn yields a certain evolution relation for eigenvectors.
The specific relation is as follows.

Let $s_j=\sqrt{n-j-\frac 12}$.  Define
 \begin{eqnarray}
 D_n=\left(
 \begin{array}{ccccc}
 d_{11}&0&0&\cdots&0\\
 0&d_{22}&0&\cdots&0\\
 \vdots&\vdots&\vdots&\cdots&\vdots\\
 0&0&0&\cdots&d_{nn}
 \end{array}
 \right),\nonumber\label{eq.2.2}
\end{eqnarray}
 where
 $$
 d_{11}=1, \quad  d_{ii}=\frac{b_{i-2 }}{s_{i-1}}d_{i-1, i-1}, \quad 2\le i\le n.
 $$
Let
$$X_i=\frac{a_i}{\sqrt \beta},\qquad 0\le i\le n-1$$
and
$$Y_i=\frac{b^2_{i}}{\beta  {s_{i+1}}}-s_i,\qquad 0\le i\le n-2. $$
Then
 \begin{eqnarray}
 D_n^{-1}H_n^\beta D_n=\left(
 \begin{array}{ccccc}
  X_0&s_0+Y_0&0&\cdots&0\\
  s_1&X_1&s_1+Y_1&\cdots&0\\
  0&s_2&X_2&\cdots&0\\
  \vdots&\vdots&\vdots&\cdots&\vdots\\
  0&0&0&\cdots&X_{n-1}
 \end{array}
 \right)\nonumber\label{eq.2.3}
\end{eqnarray}
obviously have the same eigenvalues as $H_n^\beta$.

Note that there is a significant difference between these two matrices.  The rows between $D_n^{-1}H_n^\beta D_n$ are independent of each other, while  $H_n^\beta$ is symmetric so that the rows are not independent.

Assume that $\Lambda$ is an eigenvalue of  $D_n^{-1}H_n^\beta D_n$, then by definition there exists a nonzero  eigenvector $\mathbf{\nu}^\tau=(\nu_1,\nu_2,\cdots, \nu_n)$  such that
$$
D_n^{-1}H_n^\beta D_n  \mathbf{\nu}= \Lambda\mathbf{\nu}.
$$
Without loss of generality, we can assume $\nu_1=1$. Thus, $\Lambda$ is an eigenvalue if and only if there exists an eigenvector $\mathbf{\nu}^\tau=(1,\nu_2,\cdots, \nu_n)$ such that
\begin{eqnarray}
  \left(
 \begin{array}{ccccc}
  X_0&s_0+Y_0&0&\cdots&0\\
  s_1&X_1&s_1+Y_1&\cdots&0\\
  0&s_2&X_2&\cdots&0\\
  \vdots&\vdots&\vdots&\cdots&\vdots\\
  0&0&0&\cdots&X_{n-1}
 \end{array}
 \right)\left(
 \begin{array}{c}
 1\\
  \nu_2\\
  \nu_3\\
  \vdots \\
  \nu_n
 \end{array}
 \right)=\Lambda\left(
 \begin{array}{c}
 1\\
  \nu_2\\
  \nu_3\\
  \vdots \\
  \nu_n
 \end{array}
 \right).\nonumber\label{eq.2.4}
\end{eqnarray}
It can in turn be equivalently rewritten into
\begin{eqnarray}
  \left(
 \begin{array}{ccccccc}
  1&X_0&s_0+Y_0&0&\cdots&0&0\\
  0&s_1&X_1&s_1+Y_1&\cdots&0&0\\
  0&0&s_2&X_2&\cdots&0&0\\
  \vdots&\vdots&\vdots&\vdots&\cdots&\vdots&\vdots\\
  0&0&0&0&\cdots&X_{n-1}&1
 \end{array}
 \right)\left(
 \begin{array}{c}
 0\\
 1\\
  \nu_2\\
  \nu_3\\
  \vdots \\
  \nu_n\\
  0
 \end{array}
 \right)=\Lambda\left(
 \begin{array}{c}
 1\\
  \nu_2\\
  \nu_3\\
  \vdots \\
  \nu_n
 \end{array}
 \right).\nn
\end{eqnarray}
Let $\nu_0=0, \nu_{n+1}=1$ and define
$
 r_l=\frac{\nu_{l+1}}{\nu_l}, 0\le  l\le n.
$
Thus we have the following necessary and sufficient condition for $\Lambda$ to be an eigenvalue in terms of  evolution:
 \begin{eqnarray}
 \qquad\infty=r_0,\quad  r_{l+1}=\frac{1}{1+\frac{Y_l}{s_l}}\left(-\frac{1}{r_l}+\frac{\Lambda-X_l}{s_l}\right),\quad 0\le  l\le n-2, \quad  r_n=0.\label{eq.2.5}
\end{eqnarray}
Since the $(X_l, Y_l)$'s are independent, then $r_0, r_1, \cdots, r_{n-1}, r_n$ forms a Markov chain with $\infty$ as initial state and $0$ as destination  state, and the next state $r_{l+1}$ given a present state $r_l$ will be attained through a random fractional linear transform.

Next we turn to the description of the phase evolution. Let $\mathbb{H}$ denote the upper half plane, $\mathbb{U}$ the Poincar$\acute{\mbox e}$ disk model, define the bijection
$$
\mathbf{U}: \mathbb{\bar{H}}\rightarrow \mathbb{\bar{U}},\quad z\rightarrow \frac {i-z}{i+z},
$$
which is also a bijection of the boundary. As $r$ moves on the boundary $\partial \mathbb{H}=\mathbb{R}\cup \{\infty\}$, its image under $\mathbb{U}$ will move along $\partial\mathbb{U}$.

In order to follow the number of times this image circles $\mathbb{U}$, we need to extend the action from $\partial\mathbb{U}$ to its universal cover, $\mathbb{R}'=\mathbb{R}$, where the prime is used to distinguish this from $\partial \mathbb{H}$. For an action  $\mathbf{T}$ on $\mathbb{R}'$, the three actions are denoted by
$$
\mathbb{\bar{H}}\rightarrow \mathbb{\bar{H}}: z\rightarrow z_\cdot \mathbf{T},\quad  \mathbb{\bar{U}}\rightarrow \mathbb{\bar{U}}: z\rightarrow z_\circ \mathbf{T}, \quad \mathbb{R}'\rightarrow \mathbb{R}': z\rightarrow z_\ast \mathbf{T}.
 $$
Let $\mathbf{Q}(\alpha)$ denote the rotation by $\alpha$ in $\mathbb{U}$ about $0$, i.e.,
$$
\varphi_*\mathbf{Q}(\alpha) =\varphi+\alpha.
$$
For $a, b\in \mathbb{R}$ let $\mathbf{A}(a, b)$ be the affine map $z\rightarrow a(z+b)$ in $\mathbb{H}$, and it  acts on $\mathbb{R}'$ as follows
\begin{eqnarray}
\varphi_*\mathbf{A}(a, b):= \mbox{Arg}(\mathbf{U}(\mathbf{A}(a, b)[\mathbf{U}^{-1}(e^{i\varphi})])), \label{eq.2.5.-1}\nonumber
\end{eqnarray}
where the argument is specified by the convention of fixing $\pi$ under the action of $\mathbf{A}(a, b)$ and $|\varphi_*\mathbf{A}(a, b)-\varphi|<2\pi$. In other words, we can redefine $\varphi_*\mathbf{A}(a, b)$ by
\begin{eqnarray}
\varphi_*\mathbf{A}(a, b):= \varphi+ ash(\mathbf{A}(a, b),-1, e^{i\varphi}), \label{eq.2.5.-2}\nonumber
\end{eqnarray}
where
 \begin{eqnarray}
\quad ash(\mathbf{A}(a, b),v,  w)=\mbox{Arg}_{[0,2\pi)}\Bigl(\frac{\mathbf{U}(\mathbf{A}(a, b)[\mathbf{U}^{-1}(w)])}{\mathbf{U}(\mathbf{A}(a, b)[\mathbf{U}^{-1}(v)])}\Bigr)- \mbox{Arg}_{[0,2\pi)}\Bigl(\frac wv\Bigr)\label{eq.2.5.-3}\nonumber
\end{eqnarray}
for all $w,v\in e^{i\mathbb{R}}$. Furthermore, define
$$
\mathbf{W}_l=\mathbf{A}\left(\frac{1}{1+\frac{Y_l}{s_l}}, -\frac{X_l}{s_l} \right), \quad \mathbf{R}_{l, \Lambda}=\mathbf{Q}(\pi)\mathbf{A}\left(1, \frac{\Lambda}{s_l}\right)\mathbf{W}_l,\quad 0\le l\le n-1.
$$
With this notation, the evolution of  $r$ in (\ref{eq.2.5}) becomes
\begin{eqnarray}
r_{l+1}={r_l}_\cdot \mathbf{R}_{l, \Lambda}, \quad 0\le l\le n-1\nonumber \label{eq.2.6}
\end{eqnarray}
and $\lambda$ is an eigenvalue if and only if $\infty_\cdot \mathbf{R}_{0, \Lambda}\cdots\mathbf{R}_{n-1, \Lambda}=0$. For $0\le l\le n$ define
\begin{eqnarray}
\hat{\varphi}_{l, \Lambda}=\pi_*\mathbf{R}_{0, \Lambda}\cdots\mathbf{R}_{l-1, \Lambda}, \quad \hat{\varphi}^\odot_{l, \Lambda}=0_*\mathbf{R}_{n-1, \Lambda}^{-1}\cdots\mathbf{R}^{-1}_{l, \Lambda},\nn\label{eq.2.7}
\end{eqnarray}
then
\begin{eqnarray}
\hat{\varphi}_{l, \Lambda} = \hat{\varphi}^\odot_{l, \Lambda} \quad \mbox{mod }\, 2\pi.\nn\label{eq.2.8}
\end{eqnarray}
Now we state there exist functions $\hat{\varphi}, \hat{\varphi}^\odot: \{0,1,\cdots, n\}\times \mathbb{R}\rightarrow \mathbb{R}$ satisfying the following properties:

\noindent
($i$) $ r_{l, \Lambda \cdot} \mathbf{U}=e^{i\hat{\varphi}_{l, \Lambda} }$;

\noindent
($ii$) $\hat{\varphi}_{0, \Lambda} =\pi,\quad \hat{\varphi}^\odot_{n, \Lambda}=0 $;

\noindent
($iii$) For each $0<l\le n$, $\hat{\varphi}_{l, \Lambda}$ is an analytic and strictly increasing in $\Lambda$. For $0\le l<n$, $\hat{\varphi}^\odot_{l, \Lambda}$ is analytic and strictly decreasing in $\Lambda$;

\noindent
($iv$) For any $0\le l\le n$, $\Lambda$ is an eigenvalue of $H_n^\beta$ if and only if $\hat{\varphi}_{l, \Lambda}-\hat{\varphi}^{\odot}_{l, \Lambda}\in 2\pi \mathbb{Z}$.

Fix $-2<x<2$ and $n_0=n(1-\frac{x^2}{4})-\frac 12$. Let $\Lambda=x\sqrt n+\frac{\lambda}{2\sqrt {n_0}}$ and recycle the notation $r_{l, \lambda}$, $\hat{\varphi}_{l, \lambda}$, $\hat{\varphi}^{\odot}_{l, \lambda}$ for the quantities $r_{l, \Lambda}$, $\hat{\varphi}_{l, \Lambda}$, $\hat{\varphi}^{\odot}_{l, \Lambda}$.

Note that there is a macroscopic term $\mathbf{Q}(\pi)\mathbf{A}(1, \frac{\Lambda}{s_l})$ in the evolution operator $\mathbf{R}_{l, \Lambda}$. So the phase function $\hat{\varphi}_{l, \Lambda}$ exhibits fast  oscillation in $l$.

Let
$$
\mathbf{J}_l=\mathbf{Q}(\pi)\mathbf{A}\left(1, \frac{x\sqrt n}{s_l}\right)
$$
and
\begin{eqnarray}
\rho_l=\sqrt{\frac{nx^2/4}{nx^2/4+n_0-l}}+i\sqrt{\frac{n_0-l}{nx^2/4+n_0-l}}.\nn
\end{eqnarray}
Thus $\mathbf{J}_l$ is a rotation since $\rho_{l \cdot}\mathbf{J}_l=\rho_l$ and $\rho_l$ is unique in $\bar{\mathbb{H}}$. We separate $\mathbf{J}_l$ from the evolution operator $\mathbf{R}$ to get
$$
\mathbf{R}_{l, \lambda}=\mathbf{J}_l \mathbf{L}_{l, \lambda}\mathbf{W}_l, \quad
\mathbf{L}_{l, \lambda}=\mathbf{A}\left(1, \frac{\lambda}{2\sqrt {n_0}s_l}\right).
$$
Note that for any finite $\lambda$, $\mathbf{L}_{l, \lambda}$ and $\mathbf{W}_l$ becomes infintesimal in the $n\rightarrow\infty$ limit while $\mathbf{J}_l$ does not.

Let $$\mathbf{T}_l= \mathbf{A}\left(\frac{1}{\mbox{Im} \rho_l }, -\mbox{Re}  \rho_l \right),$$
then
$$
\mathbf{J}_l=\mathbf{Q}(-2\mbox{Arg}(\rho_l))^{\mathbf{T}_l^{-1}},
$$
where $A^B=B^{-1}AB$. Let
$$
\mathbf{Q}_l=\mathbf{Q}(2\mbox{Arg}(\rho_0))\ldots  \mathbf{Q}(2\mbox{Arg}(\rho_l))
$$
and
\begin{eqnarray}
{\varphi}_{l, \lambda}=\hat{\varphi}_{l, \lambda *}\mathbf{T}_l\mathbf{Q}_{l-1},\quad {\varphi}^\odot_{l, \lambda}=\hat{\varphi}^\odot_{l, \lambda *}\mathbf{T}_l\mathbf{Q}_{l-1}, \label{eq.2.10}
\end{eqnarray}
then  it is easy to see the following  properties hold: for every $0<l\le n_0$

\noindent
($i^\prime$) ${\varphi}_{0, \lambda}=\pi$;

\noindent
($ii^\prime$) ${\varphi}_{l, \lambda}$ and $-{\varphi}^\odot_{l, \lambda}$ are analytic and strictly increasing in $\lambda$ and are also independent;

\noindent
($iii^\prime$) with $\mathbf{S}_{l, \lambda}=\mathbf{T}_l^{-1}\mathbf{L}_{l,\lambda} \mathbf{W}_l \mathbf{T}_{l+1}$ and $\eta_l=\rho_0^2\rho_1^2\cdots \rho_l^2$, we have
\begin{eqnarray}
\Delta {\varphi}_{l, \lambda}:={\varphi}_{l+1, \lambda}-{\varphi}_{l, \lambda}=ash(\mathbf{S}_{l, \lambda}, -1, e^{i{\varphi}_{l, \lambda}}\bar{\eta}_l); \label{eq.2.10-1}
\end{eqnarray}

\noindent
($iv^\prime$) $\hat{{\varphi}}_{l, \lambda}={\varphi}_{l, \lambda *}\mathbf{Q}_{l-1}^{-1}\mathbf{T}_{l}^{-1}$;

\noindent
($v^\prime$) for any $\lambda<\lambda'$ we have a.s.
$$N_{n}\left(x\sqrt n+\frac{\lambda}{2\sqrt {n_0}}, \quad x\sqrt n+\frac{\lambda'}{2\sqrt {n_0}}\right)=\sharp(({\varphi}_{l, \lambda}-{\varphi}_{l, \lambda}^\odot,\quad {\varphi}_{l, \lambda'}-{\varphi}_{l, \lambda'}^\odot]\cap 2\pi \mathbb{Z}).$$

The difference $\Delta {\varphi}_{l, \lambda}$ in (\ref{eq.2.10-1}) can be estimated as follows. Let
\begin{eqnarray}
Z_{l, \lambda}&=&i_{\cdot}\mathbf{S}_{l, \lambda}^{-1}-i\nonumber\\
&=& i_{\cdot}\mathbf{T}_{l+1}^{-1}(\mathbf{L}_{l,\lambda} \mathbf{W}_l)^{-1} \mathbf{T}_{l} -i\nonumber\\
&=& v_{l, \lambda}+V_l, \label{eq.2.10-2}
\end{eqnarray}
where
$$
v_{l, \lambda}=-\frac{\lambda}{2\sqrt{n_0}\sqrt{n_0-l}}+\frac{\rho_{l+1}-\rho_l}{\mbox{Im} \rho_l },\quad V_l=\frac{X_l+\rho_{l+1}Y_l}{\sqrt{n_0-l}}.
$$
Then by (72) of \cite{VV2009}, it follows
\begin{eqnarray}
 \Delta {\varphi}_{l, \lambda}&=& ash(\mathbf{S}_{l, \lambda}, -1, e^{i{\varphi}_{l, \lambda}}\bar{\eta_l})\nonumber\\
 &=& \mbox{Re}\left[-(1+ {e^{-i{\varphi}_{l, \lambda}}}\eta_l)Z_{l,\lambda}-\frac{i(1+ {e^{-i{\varphi}_{l, \lambda}}}\eta_l)^2}{4}Z_{l,\lambda}^2\right]+O(Z_{l,\lambda}^3) \nonumber\\
 &=& -\mbox{Re}Z_{l,\lambda}+\frac{\mbox{Im}Z_{l,\lambda}^2}{4}+\eta_l \mbox{ terms}+O(Z_{l,\lambda}^3). \label{eq.2.10-3}
\end{eqnarray}
 Assume now $\lambda=\lambda_n=o(\sqrt n)$. Note that the interval considered in our context depends on $n$ and its length  tends to infinity as $n$.  But the basic estimates given by Proposition 22 of \cite{VV2009} for single-step asymptotics for $\varphi_{l, \lambda}$ still hold. Specifically speaking, for $l\le n_0$, we have from (\ref{eq.2.10-2}) and (\ref{eq.2.10-3})
\begin{eqnarray}
  E[\Delta {\varphi}_{l, \lambda}|{\varphi}_{l, \lambda}=x]=\frac{1}{n_0}b_{ l} +\frac{1}{n_0}osc_{1,l}+O\left(\frac{1}{(n_0-l)^{3/2}}\right),\label{eq.2.13}
  \end{eqnarray}
\begin{eqnarray}
  E[(\Delta {\varphi}_{l, \lambda} )^2|{\varphi}_{l, \lambda}=x ]&=&\frac{1}{n_0}a_{ l}  +\frac{1}{n_0}osc_{2,l}+O\left(\frac{1}{(n_0-l)^{3/2}}\right), \label{eq.2.14}
  \end{eqnarray}
  and
\begin{eqnarray}
   E[|\Delta {\varphi}_{l, \lambda}|^d|{\varphi}_{l, \lambda}]= O\left(\frac{1}{(n_0-l)^{3/2}}\right),\quad d\ge 3,\label{eq.2.15}
  \end{eqnarray}
where we use $osc_{1,l}$ and $osc_{2,l}$ to emphasize the dependence on $l$, and
$$
b_l=\frac{\sqrt {n_0}\lambda }{2\sqrt{n_0-l}}-n_0\frac{\mbox{Re}(\rho_{l+1}-\rho_l)}{\mbox{Im}\rho_l}+\frac{n_0\mbox{Im}(\rho_l^2)}{2\beta \sqrt{n_0-l}},$$
$$
a_l =\frac{2n_0}{\beta(n_0-l)} +\frac{n_0(3+\mbox{Re}\rho_l^2)}{\beta(n_0-l) }.
$$
The oscillatory terms are
$$
osc_{1,l}=\mbox{Re}\Bigl(\Bigl(-v_{l,\lambda } -i\frac {q_l}2\Bigr)e^{-ix}\eta_l\Bigr)+\frac 14\mbox{Re} \Bigl(ie^{-2ix}\eta^2_lq_l\Bigr)
$$
and
$$
osc_{1,2}=  {p_l} \mbox{Re}(e^{-ix}\eta_l )+ \mbox{Re}\Bigl[q_l\Bigl(e^{-ix}\eta_l +\frac 12e^{-i2x}\eta^2_l\Bigl)\Bigr],
$$
where
$$
p_l=\frac{4n_0}{\beta (n_0-l)},\quad q_l=\frac{2n_0(1+\rho_l^2)}{\beta (n_0-l)}.
$$

\section{Proofs of main results}

\noindent
{\bf Proof of Theorem \ref{th.1.1}} Take $l=\lfloor n(1-\frac{x^2}{4})-\frac 12 ((x^2n)^{1/3})\vee 1) \rfloor<n_0$. Then according to ($ v^{\prime}$), we have
\begin{eqnarray}
N_n\left(x\sqrt n,\quad x\sqrt n+\frac{t_n}{\sqrt n}\right)=\sharp[(\varphi_{l, 0}-\varphi_{l, 0}^\odot,\quad \varphi_{l, \lambda_n'}-\varphi_{l, \lambda_n'}^\odot)\cap 2\pi \mathbb{Z}],\nn\label{eq.3.1}
\end{eqnarray}
where $\lambda_n'=\frac{2t_n \sqrt{n_0}}{\sqrt n}$.

Since $t_n\rightarrow\infty$, it suffices to prove
\begin{eqnarray}
 \frac{1}{2\pi t_n}(\varphi_{l, \lambda_n'}-\varphi_{l, \lambda_n'}^\odot-(\varphi_{l, 0}-\varphi_{l, 0}^\odot))\stackrel P\longrightarrow \rho_{sc}(x).\nn\label{eq.3.2}
\end{eqnarray}
Note $\frac{ 2\sqrt{n_0}}{\sqrt n}\rightarrow \sqrt{4-x^2}$ by definition of $n_0$. We need only to prove
\begin{eqnarray}
 \frac{1}{t_n}(\varphi_{l, t_n }-\varphi_{l, t_n}^\odot-(\varphi_{l, 0}-\varphi_{l, 0}^\odot))\stackrel P\longrightarrow 1. \label{eq.3.3}
\end{eqnarray}
by a change of variable.

To deal with the term $\varphi_{l, t_n}^\odot-\varphi_{l, 0}^\odot$, we need the following lemma due to Valk$\acute{\mbox o}$ and Vira$\acute{\mbox g}$.
\begin{lemma}\label{lm3.1}
 Assume $t_n\rightarrow\infty$ such that $\frac{t_n}{\sqrt n}\rightarrow 0$. Then it follows
 $$
 \varphi_{l, t_n}^\odot-\varphi_{l, 0}^\odot\stackrel P\longrightarrow 0.
 $$
\end{lemma}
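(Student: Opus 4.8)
\noindent The plan is to prove Lemma \ref{lm3.1} by combining the monotonicity in $(iii^{\prime})$ with the single‑step estimates (2.13)--(2.15) and the hyperbolic behaviour of the phase near the soft edge, in the spirit of Propositions 20 and 22 of \cite{VV2009}. First, by $(iii^{\prime})$ the map $\lambda\mapsto{\varphi}^\odot_{l,\lambda}$ is analytic and strictly decreasing, so ${\varphi}^\odot_{l,0}-{\varphi}^\odot_{l,t_n}\ge 0$ and it suffices to bound this nonnegative quantity above by $o_P(1)$; since ${\varphi}^\odot_{n,\lambda}=0_\ast\mathbf T_n\mathbf Q_{n-1}$ does not depend on $\lambda$, writing $\Delta^\odot_{k,\lambda}:={\varphi}^\odot_{k,\lambda}-{\varphi}^\odot_{k+1,\lambda}$ we have
\[
{\varphi}^\odot_{l,0}-{\varphi}^\odot_{l,t_n}=\sum_{k=l}^{n-1}\bigl(\Delta^\odot_{k,0}-\Delta^\odot_{k,t_n}\bigr),
\]
and, exactly as in (2.10-1)--(2.10-3), each $\Delta^\odot_{k,\lambda}$ is a smooth $ash$-expression in $Z_{k,\lambda}=v_{k,\lambda}+V_k$ whose whole $\lambda$‑dependence is the linear term $-\tfrac{\lambda}{2\sqrt{n_0}\sqrt{n_0-k}}$ inside $v_{k,\lambda}$ (respectively its hyperbolic‑coordinate analogue for $k\ge n_0$). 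Hence $\bigl|\Delta^\odot_{k,0}-\Delta^\odot_{k,t_n}\bigr|=O\!\bigl(t_n/(\sqrt{n_0}\,s_k)\bigr)$ pointwise, with conditional mean of the same order up to the $osc$- and $O((n_0-k)^{-3/2})$-terms of (2.13)--(2.15).

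\noindent Next I would split the sum at $n_0$. On the sub‑critical stretch $l\le k<n_0$ the estimates (2.13)--(2.15) apply with $\lambda=t_n$, which is legitimate because, as noted above, they continue to hold for $\lambda=t_n=o(\sqrt n)$; using the $\lambda$‑independence in $(ii^{\prime})$ together with (2.15) to produce a matching second‑moment bound, this block is reduced to its mean, which is governed by $\sum_{k=l}^{n_0-1}t_n/(2\sqrt{n_0}\sqrt{n_0-k})$ — of order $t_n(n_0-l)^{1/2}/\sqrt{n_0}$ by the choice $n_0-l\asymp(x^2n)^{1/3}$ — together with the cancellation carried by the oscillatory factors $e^{-i{\varphi}}\eta_k$, treated as in the proof of Theorem \ref{th.1.1}. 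On the edge stretch $n_0\le k\le n-1$, where (2.13)--(2.15) degenerate, I would instead use that in the appropriate coordinates the backward maps $\mathbf R_{k,\lambda}^{-1}$ are uniformly hyperbolic: the $\lambda$‑perturbations injected at the indices with small $s_k$ (where they are largest) are exponentially damped by the subsequent backward steps before the path re‑enters the elliptic region, so that the edge evolution "forgets" the value of $\lambda$ and ${\varphi}^\odot_{n_0,0}-{\varphi}^\odot_{n_0,t_n}\stackrel P\longrightarrow 0$. Combining the two stretches with the Markov step then yields ${\varphi}^\odot_{l,0}-{\varphi}^\odot_{l,t_n}\stackrel P\longrightarrow 0$.

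\noindent \textbf{The main obstacle.} The delicate part is the edge stretch $n_0\le k\le n-1$: there the single‑step estimates are unavailable, so one must make the hyperbolic contraction quantitative and check that it dominates the accumulated $\lambda$‑drift $\sum_{k\ge n_0}t_n/(\sqrt{n_0}\,s_k)\asymp t_n$, which is delicate precisely because $t_n\to\infty$. A secondary point is that $t_n$ is $n$‑dependent, so the compact‑$\lambda$ estimates of \cite{VV2009} have to be upgraded to versions uniform (quantitative) in $\lambda$ over $[0,t_n]$; the identity $(v^{\prime})$ read at level $n_0$ — where ${\varphi}^\odot$ has lost essentially all of its $\lambda$‑dependence — is a convenient way to package both.
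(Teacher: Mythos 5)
The paper's own proof of Lemma~\ref{lm3.1} is a one-line citation to Lemma~34 of \cite{VV2009} (``very similar\dots with minor changes''); it offers no details, so you are reconstructing an argument from scratch. Your overall plan---telescope the backward increments from $k=n$ down to $k=l$, split at $k=n_0$, use (2.13)--(2.15) on the sub-critical stretch and hyperbolic contraction past the edge---is in the right spirit and mirrors the near-edge/hyperbolic decomposition underlying \cite{VV2009}. But there are two concrete gaps, one of which is more than a detail to be filled in.

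On the sub-critical stretch you correctly compute $\sum_{k=l}^{n_0-1}\frac{t_n}{2\sqrt{n_0}\sqrt{n_0-k}}\asymp t_n\sqrt{(n_0-l)/n_0}\asymp t_n/n^{1/3}$, and you then assert this is ``together with the cancellation carried by the oscillatory factors $e^{-i\varphi}\eta_k$'' controlled as in Theorem~\ref{th.1.1}. That is not right: in the expansion (\ref{eq.2.10-3}) the $\lambda$-drift $\lambda/(2\sqrt{n_0}\sqrt{n_0-k})$ sits in $\mathrm{Re}\,v_{k,\lambda}$ inside the \emph{non-oscillatory} piece $-\mathrm{Re}\,Z_{k,\lambda}$; the Lemma~37 cancellations of \cite{VV2009} (cf.\ (\ref{eq.3.12-1})) only kill the $e^{-i\varphi}\eta_k$-modulated pieces, not this term. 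So your sub-critical block contributes $\Theta(t_n/n^{1/3})$, which does not tend to zero under the stated hypotheses ($t_n=o(\sqrt n)$ allows $t_n\gg n^{1/3}$). On the edge stretch $n_0\le k\le n-1$ you acknowledge the contraction argument is missing; note moreover that the conjugated coordinates ${\varphi}^\odot_{k,\lambda}$ and the formula $v_{k,\lambda}=-\lambda/(2\sqrt{n_0}\sqrt{n_0-k})+\cdots$ are not even defined there ($\mathrm{Im}\,\rho_k\to 0$ as $k\uparrow n_0$, so the conjugation by $\mathbf T_k$ degenerates), so the telescoping quantity $\Delta^\odot_{k,\lambda}$ you sum over that range does not exist in this chart. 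You need a separate coordinate chart beyond the edge and a quantitative (not merely qualitative) contraction estimate in it, plus a mechanism to beat the $t_n/n^{1/3}$ drift from the parabolic neighborhood of $n_0$. Because the paper delegates all of this to \cite{VV2009}, and your proposal leaves precisely these parts open, the proof is not complete as written.
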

\begin{proof}
 It  is very similar to that of Lemma 34 in \cite{VV2009} with minor changes.
\end{proof}
To prove (\ref{eq.3.3}), it now remains to proving
\begin{eqnarray}
 \frac{\varphi_{l, t_n } }{t_n}\stackrel P\longrightarrow 1,  \quad \frac{\varphi_{l, 0} }{t_n}\stackrel P\longrightarrow 0. \label{eq.3.4}
\end{eqnarray}
In turn, (\ref{eq.3.4}) easily follows from
\begin{lemma}\label{lm3.2}
\begin{eqnarray}
\frac{1}{t_n}(E \varphi_{l, t_n } -t_n )\longrightarrow 0, \quad \frac{1}{t^2_n} Var(\varphi_{l, t_n } ) \longrightarrow 0\label{eq.3.5}
\end{eqnarray}
and
\begin{eqnarray}
\frac{1}{t_n} E\varphi_{l, 0} \longrightarrow 0,\quad \frac{1}{t^2_n}Var(\varphi_{l, 0})  \longrightarrow 0.  \label{eq.3.6}
\end{eqnarray}
 \end{lemma}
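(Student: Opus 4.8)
\noindent{\bf Proof outline for Lemma \ref{lm3.2}.}
The plan is to write $\varphi_{l,\lambda}=\pi+\sum_{k=0}^{l-1}\Delta\varphi_{k,\lambda}$ (using $(i^\prime)$) and exploit the Markov/martingale structure behind (\ref{eq.2.10-1}): since the pairs $(X_k,Y_k)$ are independent, $\varphi_{k,\lambda}$ is $\mathcal{F}_k$-measurable with $\mathcal{F}_k:=\sigma((X_j,Y_j):j<k)$, and $\Delta\varphi_{k,\lambda}$ is a function of $\varphi_{k,\lambda}$ and the fresh pair $(X_k,Y_k)$ alone (the $\eta_k$ in (\ref{eq.2.10-1}) being deterministic). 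Hence (\ref{eq.2.13})--(\ref{eq.2.15}) are conditional-moment identities given $\mathcal{F}_k$ with conditioning value $x=\varphi_{k,\lambda}$. I would treat $\lambda=t_n$ and $\lambda=0$ together, writing $\lambda=\lambda_n=o(\sqrt n)$; recall $n_0-l\asymp(x^2n)^{1/3}$.

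\emph{Mean.} Taking expectations termwise and using (\ref{eq.2.13}), $E\varphi_{l,\lambda}=\pi+\frac1{n_0}\sum_{k<l}b_k+\frac1{n_0}\sum_{k<l}E[osc_{1,k}(\varphi_{k,\lambda})]+\sum_{k<l}O((n_0-k)^{-3/2})$, and the last sum is $O((n_0-l)^{-1/2})=O(n^{-1/6})$. In $\frac1{n_0}\sum b_k$ the only $\lambda$-term is $\frac{\lambda}{2\sqrt{n_0}}\sum_{k<l}(n_0-k)^{-1/2}$, which by comparison with $\int_0^l(n_0-s)^{-1/2}\,ds=2(\sqrt{n_0}-\sqrt{n_0-l})$ equals $\lambda(1-\sqrt{(n_0-l)/n_0})+O(\lambda(n_0-l)^{-1/2})=\lambda+O(\lambda n^{-1/3})$; the remaining summands of $b_k$ are built from the unit-modulus numbers $\rho_k$, and, after writing $\rho_{k+1}-\rho_k=\rho_k(e^{i(\arg\rho_{k+1}-\arg\rho_k)}-1)$ and comparing the resulting sums with integrals, they telescope up to lower-order Riemann-sum errors and add a quantity bounded uniformly in $n$. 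The delicate point is the oscillatory sum: each $\frac1{n_0}osc_{1,k}(\varphi_{k,\lambda})$ is, up to lower order, a slowly varying \emph{deterministic} amplitude $c_k$ of size $O((n_0-k)^{-1})$ — or $O(t_n n_0^{-3/2}(n_0-k)^{-1/2})$ for its $\lambda$-part — times $e^{-ij\varphi_{k,\lambda}}\eta_k^{\,j}$, $j\in\{1,2\}$, where $\eta_k=\prod_{m\le k}\rho_m^2$ rotates deterministically by the factor $e^{2ij\arg\rho_{k+1}}$ per step. I would sum by parts against $\eta_k^{\,j}$, keeping $e^{-ij\varphi_{k,\lambda}}$ inside the amplitude, so the denominator $e^{2ij\arg\rho_{k+1}}-1$ stays deterministic, of modulus comparable to $\arg\rho_{k+1}\asymp\sqrt{(n_0-k)/n}$ near the edge (and, in the $j=2$ term, balanced against the amplitude factor $|1+\rho_k^2|=2|\cos\arg\rho_k|$ near $\arg\rho_k=\pi/2$). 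The boundary term is then $O(1)$ precisely because the cutoff $n_0-l\asymp(x^2n)^{1/3}$ keeps this modulus at least of order $n^{-1/3}$; in the bulk term produced by summation by parts, the increment $\Delta(e^{-ij\varphi_{k,\lambda}}c_k)$ has a deterministic part of size $O((n_0-k)^{-2})$ and a part proportional to $\Delta\varphi_{k,\lambda}$, and upon taking expectations the martingale component of $\Delta\varphi_{k,\lambda}$ drops out (zero conditional mean), leaving only the conditional drift $E[\Delta\varphi_{k,\lambda}\mid\mathcal{F}_k]=O((n_0-k)^{-1})+O(t_n n_0^{-1/2}(n_0-k)^{-1/2})$, so the bulk sum is $O(1)+o(t_n)$. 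Altogether $E\varphi_{l,\lambda}=\pi+\lambda+O(1)+o(t_n)$, which gives the mean parts of (\ref{eq.3.5}) and (\ref{eq.3.6}) since $t_n\to\infty$.

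\emph{Variance.} I would use the Doob decomposition $\varphi_{l,\lambda}-E\varphi_{l,\lambda}=M_l+A_l$, $M_l=\sum_{k<l}(\Delta\varphi_{k,\lambda}-E[\Delta\varphi_{k,\lambda}\mid\mathcal{F}_k])$, $A_l=\sum_{k<l}(E[\Delta\varphi_{k,\lambda}\mid\mathcal{F}_k]-E\Delta\varphi_{k,\lambda})$. By orthogonality of martingale increments and (\ref{eq.2.14})--(\ref{eq.2.15}), $\mbox{Var}(M_l)=\sum_{k<l}E\,\mbox{Var}(\Delta\varphi_{k,\lambda}\mid\mathcal{F}_k)\le\frac1{n_0}\sum_{k<l}a_k+\frac1{n_0}\sum_{k<l}E[osc_{2,k}(\varphi_{k,\lambda})]+O(n^{-1/6})$; since $\frac1{n_0}a_k=\frac{5+\mbox{Re}\rho_k^2}{\beta(n_0-k)}$, the first sum is $O(\log\frac{n_0}{n_0-l})=O(\log n)$, and the $osc_{2,k}$ sum is $O(1)$ by the Abel summation of the previous paragraph. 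Since $b_k$ is deterministic, $A_l=\frac1{n_0}\sum_{k<l}(osc_{1,k}(\varphi_{k,\lambda})-E\,osc_{1,k}(\varphi_{k,\lambda}))$ plus an $O(n^{-1/6})$ predictable remainder, and I would bound $EA_l^2$ by the same summation-by-parts applied under the $L^2$-norm: expanding the square, the cross terms are killed by the deterministic rotation of $\eta_k$ and the diagonal terms bring in the conditional variances $E[(\Delta\varphi_{k,\lambda})^2\mid\mathcal{F}_k]=O((n_0-k)^{-1})$, whence $EA_l^2=O(\log n)$. Therefore $\mbox{Var}(\varphi_{l,\lambda})\le 2\mbox{Var}(M_l)+2EA_l^2=O(\log n)=o(t_n^2)$ since $t_n/\sqrt{\log n}\to\infty$, which is the variance parts of (\ref{eq.3.5}) and (\ref{eq.3.6}).

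\emph{Main obstacle.} The hard part is the oscillatory-sum control — showing that $\frac1{n_0}\sum_{k<l}osc_{1,k}(\varphi_{k,\lambda})$ and $\frac1{n_0}\sum_{k<l}osc_{2,k}(\varphi_{k,\lambda})$ contribute only $O(1)+o(t_n)$ to the mean and $O(\log n)$ to the variance, with the random phase $\varphi_{k,\lambda}$ sitting inside the oscillation and $\lambda=t_n$ itself diverging. This amounts to rerunning the summation-by-parts estimates of Valk\'o and Vir\'ag (Proposition 22 and the lemmas around (72) of \cite{VV2009}) with two modifications: $\lambda$ of size $o(\sqrt n)$ rather than fixed, and $l$ stopped at $n_0-l\asymp(x^2n)^{1/3}$ rather than near the edge; the latter truncation is exactly what keeps the deterministic rotation rate of $\eta_k$ bounded below by a constant times $n^{-1/3}$, so that the Abel-summation boundary terms remain bounded. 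Everything else reduces to routine, if lengthy, estimates of Riemann sums and conditional moments.
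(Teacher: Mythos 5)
Your proposal follows essentially the same route as the paper: write $\varphi_{l,\lambda}=\pi+\sum_{k<l}\Delta\varphi_{k,\lambda}$, use the Markov structure to reduce to the single-step estimates (\ref{eq.2.13})--(\ref{eq.2.15}), recover the main term $\lambda$ from the $b_k$ sum, get $O(\log n)$ from the $a_k$ sum via martingale orthogonality, and treat the oscillatory sums by the deterministic rotation of $\eta_k$. Two minor differences are worth noting. First, for the oscillatory sums the paper simply invokes Lemma 37 of \cite{VV2009} (your ``main obstacle'' paragraph reproduces in outline what that lemma and the surrounding summation-by-parts machinery establish); you do somewhat more work here, which is fair given that $\lambda=t_n\to\infty$ and the cutoff $n_0-l\asymp(x^2n)^{1/3}$ must be checked against VV's hypotheses, but this is a re-verification of the cited lemma rather than a new idea. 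Second, for the compensator $A_l=\sum_{k<l}(E[\Delta\varphi_{k,\lambda}\mid\mathcal{F}_k]-E\Delta\varphi_{k,\lambda})$ your $L^2$ bound $EA_l^2=O(\log n)$ is actually cleaner than the paper's somewhat informal assertion that $\sum_{k<l}(E\Delta\varphi_{k,t_n}-E(\Delta\varphi_{k,t_n}\mid\varphi_{k,t_n}))=O(1)$ (a random quantity stated as if deterministic); both suffice since $\log n=o(t_n^2)$. Overall: correct, same skeleton, with your treatment of $A_l$ being a modest improvement in rigor.
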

 \begin{proof} We shall only prove (\ref{eq.3.5}), since the other is very similar and simpler. First, note
 \begin{eqnarray}
 E \varphi_{l, t_n }  &=& E\sum_{k=0}^{l-1} \Delta\varphi_{k, t_n }\\
 &=& E\sum_{k=0}^{l-1} E ( \Delta\varphi_{k, t_n }|\varphi_{k,t_n}), \nn \label{eq.3.8}
\end{eqnarray}
where $\Delta\varphi_{k, \lambda }=\varphi_{k+1, \lambda }-\varphi_{k, \lambda}$ for any $\lambda$.
By virtue of the asymptotic estimate (\ref{eq.2.13}) for increments, we have
\begin{eqnarray}
E (\Delta\varphi_{k, t_n }|\varphi_{k,t_n})=\frac{1}{n_0}b_k+ osc_{1,k}+O\left(\frac{1}{(n_0-k)^{3/2}}\right),\label{eq.3.9}
\end{eqnarray}
where
$$
b_k=\frac{\sqrt{n_0}t_n}{2\sqrt{n_0-k}}- n_0\frac{\mbox{Re} (\rho_{k+1}-\rho_{k })}{\mbox{Im}\rho_k}+\frac{n_0\mbox{Im}(\rho_k^2)}{2\beta  {(n_0-k)}}
$$
and
$$
osc_{1,k}=\mbox{Re}((-n_0v_{k,t_n } -i\frac {q_k}2)e^{-i\varphi_{k,t_n}}\eta_k)+\frac 14\mbox{Re} (iq_ke^{-2i\varphi_{k,t_n}}\eta^2_k ).
$$
A simple calculus yields
\begin{eqnarray}
\frac{1}{n_0}\sum_{k=0}^{l-1}\frac{\sqrt{n_0}t_n}{2\sqrt{n_0-k}}= t_n(1+ o(1))\label{eq.3.10}
\end{eqnarray}
and
\begin{eqnarray}
\frac{1}{n_0}\sum_{k=0}^{l-1}\frac{\mbox{Re}(\rho_{k+1}-\rho_{k }) }{\mbox{Im}\rho_k}=o(t_n), \quad \frac{1}{n_0}\sum_{k=0}^{l-1}\frac{n_0\mbox{Im}(\rho_k^2)}{2\beta  {(n_0-k)}}=o(t_n).\label{eq.3.11}
\end{eqnarray}
Also, it follows from Lemma 37 of \cite{VV2009} that for any $x$
\begin{eqnarray}
\sum_{k=0}^{l-1} \mbox{Re}\left(\left(-n_0v_{k,\lambda } -i\frac {q_k}2\right)e^{-ix}\eta_k\right) =O(1),\quad  \sum_{k=0}^{l-1}\mbox{Re} (iqe^{-2ix}\eta^2_l )=O(1). \label{eq.3.12-1}
\end{eqnarray}
Combining (\ref{eq.3.9})-(\ref{eq.3.12-1}), we have
$$
\sum_{k=0}^{l-1} E (\Delta\varphi_{k, t_n }|\varphi_{k,t_n}  )=t_n(1+o(1)).
$$
 Thus we have shown the first statement of (\ref{eq.3.5}).

For the asymptotics for variance, note
\begin{eqnarray}
& &Var(\varphi_{l, t_n }) \nn \\& &=E\left(\sum_{k=0}^{l-1}\Delta\varphi_{k, t_n}-E\Delta\varphi_{k, t_n}\right)^2\nonumber\\
& &=E\left(\sum_{k=0}^{l-1}\Delta\varphi_{k, t_n}-E(\Delta\varphi_{k, t_n}|\varphi_{k,t_n})-(E\Delta\varphi_{k, t_n}-E(\Delta\varphi_{k, t_n}|\varphi_{k,t_n}))\right)^2.\nn\label{eq.3.10-1}
\end{eqnarray}
Since
\begin{eqnarray}
\sum_{k=0}^{l-1}(E\Delta\varphi_{k, t_n}-E(\Delta\varphi_{k, t_n}|\varphi_{k,t_n}))=O(1), \nn\label{eq.3.11-1}
\end{eqnarray}
it suffices to prove
\begin{eqnarray}
\frac 1{t^2_n}E\left(\sum_{k=0}^{l-1}\Delta\varphi_{k, t_n}-E(\Delta\varphi_{k, t_n}|\varphi_{k,t_n})  \right)^2\longrightarrow 0. \label{eq.3.12}
\end{eqnarray}
To this end, note $\varphi_{0, t_n}, \varphi_{1, t_n},\cdots, \varphi_{l,t_n}$ constructs a Markov chain so that $\Delta\varphi_{k, t_n}-E(\Delta\varphi_{k, t_n}|\varphi_{k,t_n} ), 1\le k\le l$ forms a martingale difference sequence. Hence it follows
\begin{eqnarray}
 E\left(\sum_{k=0}^{l-1}\Delta\varphi_{k, t_n}-E(\Delta\varphi_{k, t_n}|\varphi_{k,t_n}) \right)^2&=&\sum_{k=0}^{l-1} E \left(\Delta\varphi_{k, t_n}-E(\Delta\varphi_{k, t_n}|\varphi_{k,t_n})\right)^2\nonumber\\
 &\le& \sum_{k=0}^{l-1}  E (\Delta\varphi_{k, t_n})^2\nonumber\\
 &=&  E\sum_{k=0}^{l-1}  E \left((\Delta\varphi_{k, t_n})^2|\varphi_{k, t_n}\right).\label{eq.3.13}
\end{eqnarray}
Each conditional expectation in (\ref{eq.3.13}) is estimated by (\ref{eq.2.14}), and  note
\begin{eqnarray}
\frac{1}{n_0}\sum_{k=0}^{l-1} a_k&=&  \sum_{k=0}^{l-1}\frac{2 }{\beta(n_0-k)} +\frac{ 3+\mbox{Re}\rho_k^2 }{\beta(n_0-k) }\\
&=&
O(\log n) \nonumber \label{eq.3.15}
\end{eqnarray}
and
\begin{eqnarray}
\frac{1}{n_0}\sum_{k=0}^{l-1} Eosc_{2,k} =O(1).   \label{eq.3.16}
\end{eqnarray}
Therefore, (\ref{eq.3.12}) holds  since $\log n=o(t^2_n)$ by the assumption, which concludes the proof of Lemma \ref{lm3.2}.
\end{proof}

\noindent
{\bf Proof of Theorem \ref{th.1.2}} The proof is again based on the representation in ($v^\prime$) and the central limit theorem for Markov chain. Note   $x=0$ and  $n_0=n-\frac 12$. Take $l=  \lfloor n_0\rfloor=n-1 $, we have
\begin{eqnarray}
N_n(0, \infty)=\sharp ((\varphi_{n-1,0}-\varphi^\odot_{n-1,0},\quad \varphi_{n-1,\infty}-\varphi^\odot_{n-1,\infty})\cap 2\pi \mathbb{Z})\label{eq.3.17},
\end{eqnarray}
from which it readily follows
\begin{eqnarray}
\left|N_n(0, \infty)-\frac {1}{2\pi}(\varphi_{n-1,\infty}-\varphi^\odot_{n-1,\infty}-(\varphi_{n-1,0}-\varphi^\odot_{n-1,0} ))\right|\le 1. \label{eq.3.18}
\end{eqnarray}
We need the following lemma to compute the difference $\varphi_{n-1,\infty}-\varphi^\odot_{n-1,\infty}$ and $\varphi^\odot_{n-1,0}$.
\begin{lemma} \label{lm3.3} We have for $0<l\le n_0$

\noindent
(i)
 $$
\hat{\phi}_{l, \infty}=\pi, \qquad \hat{\phi}^{\odot}_{l, \infty}=-2(n-l)\pi;
$$
(ii)
$$
 {\phi}_{l, \infty}=(l+1)\pi,\qquad  \phi^{\odot}_{l, \infty}=-2n\pi+3l\pi;
$$
(iii)
$$
{\phi}^{\odot}_{l, 0}=\hat{\phi}^{\odot}_{l, 0}+l\pi, \qquad \hat{\phi}^{\odot}_{l, 0}=0_*\mathbf{R}^{-1}_{n-1,0}\cdots \mathbf{R}^{-1}_{l,0}.
$$
\end{lemma}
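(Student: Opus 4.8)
The entire lemma is a direct computation inside the phase--evolution framework of Section~2; the plan is to prove (i) and then read off (ii) and (iii) from it, exploiting the fact that at $x=0$ the auxiliary quantities $\rho_l$ collapse. So I would begin with (i).

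\emph{Part (i).} Putting $\Lambda=\infty$ in the recursion $(\ref{eq.2.5})$ makes it degenerate: $r_0=\infty$ and $r_{l+1}=\frac{1}{1+Y_l/s_l}\bigl(-\frac1{r_l}+\frac{\Lambda-X_l}{s_l}\bigr)\to\infty$, so $r_{l,\infty}=\infty$ for all $0\le l\le n$, whence $e^{i\hat\varphi_{l,\infty}}=\mathbf U(\infty)=-1$ and $\hat\varphi_{l,\infty}\in\pi+2\pi\mathbb Z$. What needs doing is to fix the lift to $\mathbb R'$, and for that the key point is that in $\mathbf R_{l,\Lambda}=\mathbf Q(\pi)\mathbf A(1,\frac{\Lambda}{s_l})\mathbf W_l$ the affine factor at $\Lambda=\infty$ becomes the degenerate map $\mathbf A(1,\infty)$, which --- evaluated from the defining formula for $\varphi_*\mathbf A(a,b)$ on $\mathbb R'$, using $\mathrm{Arg}_{[0,2\pi)}(1)=0$ --- sends every fundamental interval $[\pi+2\pi k,\pi+2\pi(k+1))$ onto its left endpoint $\pi+2\pi k$; in addition $\mathbf Q(\pm\pi)$ acts by $\varphi\mapsto\varphi\pm\pi$, and every non-degenerate affine map $\mathbf W_l^{\pm1}$ is orientation preserving, since $1+Y_l/s_l=b_l^2/(\beta s_l s_{l+1})>0$ a.s., so its lift fixing $\pi$ carries each fundamental interval onto itself. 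An upward induction on $l$ from $\hat\varphi_{0,\infty}=\pi$ (property~($ii$)) then gives $\hat\varphi_{l+1,\infty}=\hat\varphi_{l,\infty *}\mathbf R_{l,\infty}$ via $\pi\xrightarrow{\mathbf Q(\pi)}2\pi\xrightarrow{\mathbf A(1,\infty)}\pi\xrightarrow{\mathbf W_l}\pi$, so $\hat\varphi_{l,\infty}=\pi$ throughout. Dually, a downward induction from $\hat\varphi^\odot_{n,\infty}=0$, using $\mathbf R_{l,\Lambda}^{-1}=\mathbf W_l^{-1}\mathbf A(1,-\frac{\Lambda}{s_l})\mathbf Q(-\pi)$, runs $\hat\varphi^\odot_{l,\infty}=\hat\varphi^\odot_{l+1,\infty *}\mathbf R_{l,\infty}^{-1}$ by sending $-2(n-l-1)\pi$ under $\mathbf W_l^{-1}$ (same fundamental interval), then under $\mathbf A(1,-\infty)$ to its left endpoint $-2(n-l)\pi+\pi$, then under $\mathbf Q(-\pi)$ to $-2(n-l)\pi$; the first of these steps, $l=n-1$, involves the degenerate $\mathbf W_{n-1}$, but it collapses onto a point of $2\pi\mathbb Z$ and the same bookkeeping gives $\hat\varphi^\odot_{n-1,\infty}=-2\pi$, matching the formula.

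\emph{Parts (ii) and (iii).} Now $x=0$, so $n_0=n-\frac12$, and $\rho_l=i$ for every $0\le l\le n_0$ (its real part vanishes and its imaginary part equals $1$); hence $\mathbf T_l=\mathbf A\bigl(\frac1{\mathrm{Im}\,i},-\mathrm{Re}\,i\bigr)=\mathbf A(1,0)$ is the identity on $\mathbb R'$, and $2\,\mathrm{Arg}(\rho_l)=\pi$, so $\mathbf Q_{l-1}=\mathbf Q(\pi)^{l}$ acts by $\varphi\mapsto\varphi+l\pi$. Inserting (i) into $(\ref{eq.2.10})$ gives immediately $\varphi_{l,\infty}=\hat\varphi_{l,\infty *}\mathbf T_l\mathbf Q_{l-1}=\pi+l\pi=(l+1)\pi$ and $\varphi^\odot_{l,\infty}=\hat\varphi^\odot_{l,\infty *}\mathbf T_l\mathbf Q_{l-1}=-2(n-l)\pi+l\pi=-2n\pi+3l\pi$, which is (ii). The same substitution in $(\ref{eq.2.10})$ gives $\varphi^\odot_{l,0}=\hat\varphi^\odot_{l,0 *}\mathbf T_l\mathbf Q_{l-1}=\hat\varphi^\odot_{l,0}+l\pi$, and the remaining identity $\hat\varphi^\odot_{l,0}=0_*\mathbf R^{-1}_{n-1,0}\cdots\mathbf R^{-1}_{l,0}$ is simply the definition of $\hat\varphi^\odot_{l,\Lambda}$ evaluated at $\Lambda=x\sqrt n+\frac{\lambda}{2\sqrt{n_0}}=0$ (recall $x=\lambda=0$), which is (iii).

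The only genuinely delicate step is part (i): pinning down the $\mathbb R'$--lift of the degenerate map $\mathbf A(1,\pm\infty)$ --- namely that it collapses each fundamental interval onto its \emph{lower} endpoint --- which requires care with the $\mathrm{Arg}_{[0,2\pi)}$ convention in the definition of $\varphi_*\mathbf A(a,b)$ and with the fact that the mod-$2\pi$ coincidence of $\hat\varphi_{l,\Lambda}$ and $\hat\varphi^\odot_{l,\Lambda}$ valid for finite $\Lambda$ need not survive at $\Lambda=\infty$. Everything past that is a one--line substitution, running exactly parallel to the analogous computations of Valk\'o and Vir\'ag \cite{VV2009}.
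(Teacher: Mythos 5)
Your proof is correct and follows essentially the same route as the paper's: you establish $\hat{\varphi}_{l,\infty}=\pi$ by tracking the formal evaluation of the composed action through $\mathbf Q(\pi)$, the degenerate affine map, and $\mathbf W_l$, you get $\hat{\varphi}^\odot_{l,\infty}=-2(n-l)\pi$ by the downward recursion via the $ash$ formula (which is exactly what the paper does using Fact~15 / formula~(34) of \cite{VV2009}), and then (ii) and (iii) drop out of $\rho_l=i$, $\mathbf T_l=\mathrm{Id}$, $\mathbf Q_{l-1}=\mathbf Q(l\pi)$ just as in the text. You are somewhat more explicit than the paper about pinning down the $\mathbb R'$-lift of the degenerate map $\mathbf A(1,\pm\infty)$ (the paper states ``the image of $\infty$ under $\mathbf U$ is $-1$, which corresponds to $\pi\in\mathbb R'$'' for the forward direction and relies on the angular-shift recursion only for the backward one), and your flag that the formal substitution of $\Lambda=\infty$ into the $\mathrm{Arg}_{[0,2\pi)}$-convention is the one genuinely delicate point --- and that the mod-$2\pi$ matching valid for finite $\Lambda$ need not pass to the degenerate limit --- is a fair and useful observation that the paper leaves implicit.
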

\begin{proof} First, prove $(i)$. Recall
 \begin{eqnarray}
\hat{\varphi}_{l, \infty}=\pi_*\mathbf{R}_{0, \infty}\cdots\mathbf{R}_{l-1, \infty}, \quad \hat{\varphi}^\odot_{l, \infty}=0_*\mathbf{R}_{n-1, \infty}^{-1}\cdots\mathbf{R}^{-1}_{l, \infty},\nn\label{eq.3.18-1}
\end{eqnarray}
where $\mathbf{R}_{l, \infty}=\mathbf{Q}(\pi)\mathbf{A}\left(1, \infty\right)\mathbf{W}_l$.

Note the affine transformation $\mathbf{A}\left(1, \infty\right)\mathbf{W}_l $ maps any $z$ to $\infty$, and the image of $\infty$ under the M$\ddot{\mbox o}$bius transformation $\mathbf{U}$ is $-1$, which in turn corresponds to $\pi\in \mathbb{R}^\prime $. Thus  it easily follows $ \hat{\phi}_{l, \infty}=\pi$.

For $\hat{\phi}^{\odot}_{l, \infty}$, note
\begin{eqnarray}
\hat{\phi}^{\odot}_{l, \infty}=\hat{\phi}^{\odot}_{l+1, \infty *} \mathbf{R}_{l, \infty}^{-1},\nn\label{eq.3.19}
\end{eqnarray}
where $\mathbf{R}_{l, \infty}^{-1}= \mathbf{W}_l^{-1}\mathbf{A}\left(1, -\infty\right)\mathbf{Q}(-\pi)$.

By the angular shift formula (see Fact 15 and (34) of \cite{VV2009}), we have
\begin{eqnarray}
\hat{\phi}^{\odot}_{l, \infty}&=&\hat{\phi}^{\odot}_{l+1, \infty *} \mathbf{W}_l^{-1}\mathbf{A}\left(1, -\infty\right)-\pi \nonumber\\
&=&\hat{\phi}^{\odot}_{l+1, \infty  }+ash \left( \mathbf{W}_l^{-1}\mathbf{A}\left(1, -\infty\right), -1, e^{i\hat{\phi}^{\odot}_{l+1, \infty  }}\right)-\pi\nonumber\\
&=&\hat{\phi}^{\odot}_{l+1, \infty  }+ \mbox{Arg}_{[0, 2\pi)}\left(\frac{e^{i\hat{\phi}^{\odot}_{l+1, \infty }}\hskip0.3mm_\circ \mathbf{W}_l^{-1}\mathbf{A}\left(1, -\infty\right) }{-1_\circ\mathbf{W}_l^{-1}\mathbf{A}\left(1, -\infty\right) }   \right)\nonumber\\
& &-\mbox{Arg}_{[0, 2\pi)}\left(\frac{e^{i\hat{\phi}^{\odot}_{l+1, \infty } } }{-1 }   \right)-\pi\nonumber\\
&=&\hat{\phi}^{\odot}_{l+1, \infty  }+ \mbox{Arg}_{[0, 2\pi)}(1)- \mbox{Arg}_{[0, 2\pi)}(-e^{i\hat{\phi}^{\odot}_{l+1, \infty } })-\pi,\nn
\label{eq.3.20}
\end{eqnarray}
from which and the fact $\hat{\phi}^{\odot}_{n, \infty}=0$  one can easily derive
\begin{eqnarray}
   \hat{\phi}^{\odot}_{n-1, \infty}=-2\pi, \quad \hat{\phi}^{\odot}_{n-1, \infty}=-4\pi,\cdots, \hat{\phi}^{\odot}_{l, \infty}= -2(n-l)\pi. \nn\label{eq.3.21}
\end{eqnarray}
Next we turn to $(ii)$ and $(iii)$. Since $x=0$, then $\rho_{l }=i$, and so $\mathbf{T}_l=\mbox{Id}$ and $\mathbf{Q}_{l-1}=\mathbf{Q}( l\pi)$ for each $0<l\le n_0$. Thus we have by (\ref{eq.2.8})
\begin{eqnarray}
   \varphi_{l,\lambda}&=&  \hat{\varphi}_{l,\lambda *}\mathbf{T}_l\mathbf{Q}_{l-1}\nn\\
   &=&  \hat{\varphi}_{l,\lambda *} \mathbf{Q}(l\pi)\nonumber\\
   &=&\hat{\varphi}_{l,\lambda }+l\pi, \nonumber \label{eq.3.22}
\end{eqnarray}
 where $0\le \lambda\le \infty$. Similarly, $\varphi^\odot_{l,\lambda}=\hat{\varphi}^\odot_{l,\lambda }+l\pi$.
\end{proof}
We now  apply  Lemma \ref{lm3.3}  to  immediately yield
\begin{eqnarray}
\varphi_{n-1,\infty}-\varphi^\odot_{n-1,\infty}=3\pi\label{eq.3.22-1}
\end{eqnarray}
and
\begin{eqnarray}
\varphi^\odot_{n-1,0}= 0_*\mathbf{R}^{-1}_{n-1,0}+ (n-1)\pi.\label{eq.3.22-2}
\end{eqnarray}
Also, it follows
\begin{eqnarray}
\frac{0_*\mathbf{R}^{-1}_{n-1,0}}{\sqrt{\log n}}\stackrel P\longrightarrow 0\label{eq.3.22-3}
\end{eqnarray}
 Combining  (\ref{eq.3.17})- (\ref{eq.3.22-3}) together, we need only to prove
\begin{eqnarray}
\frac{\varphi_{n-1, 0} }{\sqrt{\log n}}\stackrel d\longrightarrow \mathcal{N}\left(0,\frac{4}{\beta}\right).\label{eq.3.23}
\end{eqnarray}
To this end, we shall use the following central limit theorem for Markov chain. As mentioned in the proof of Theorem \ref{th.1.1}, $\pi=\varphi_{0, 0}, \varphi_{1, 0}, \cdots, \varphi_{n-1, 0}$ forms a Markov chain.  Let
$$
z_{l+1}=\Delta\varphi_{l, 0} -E(\Delta\varphi_{l, 0}|\varphi_{l, 0}).
$$
Then $z_1, z_2, \cdots, z_{n-1}$ forms a martingale difference sequence.  A classical martingale central limit theorem implies that if the following three conditions
 \begin{eqnarray}
s^2_n:= \sum_{l=1}^{n-1}Ez_l^2\rightarrow\infty,\label{eq.3.24}
\end{eqnarray}
 \begin{eqnarray}
 \frac{1}{s_n^2}\sum_{l=1}^{n-1}E\left(z_l^2| \varphi_{l-1, 0}\right)\stackrel P\longrightarrow 1\label{eq.3.25}
\end{eqnarray}
and
\begin{eqnarray}
 \frac{1}{s_n^2}\sum_{l=1}^{n-1}E\left(z_l^2\mathbf{1}_{(|z_l|>\varepsilon s_n)}| \varphi_{l-1, 0}\right)\stackrel P\longrightarrow 0, \quad \forall \varepsilon>0\label{eq.3.26}
\end{eqnarray}
 are satisfied, then we have
\begin{eqnarray}
  \frac{1}{s_n} \sum_{l=1}^{n-1}z_l\stackrel d\longrightarrow \mathcal{N}(0,1).\label{eq.3.27}
\end{eqnarray}
We next verify conditions (\ref{eq.3.24}) - (\ref{eq.3.26}) by asymptotic estimates (\ref{eq.2.13}) - (\ref{eq.2.15}) for the increments $E(\Delta\varphi_{l, 0} |\varphi_{l, 0})$. Let us begin with estimating $s^2_n$. Note
\begin{eqnarray}
  E(\Delta\varphi_{l, 0} |\varphi_{l, 0})= O\left(\frac{1}{(n_0-l)^{3/2}}\right)\nn\label{eq.3.28}
\end{eqnarray}
and
\begin{eqnarray}
  E(\Delta\varphi_{l, 0})^2&=&E(E((\Delta\varphi_{l, 0})^2 |\varphi_{l, 0}))\nn\\
  &=& \frac{4}{\beta(n_0-l)}  + \frac{4}{\beta(n_0-l)}E \mbox{Re}((-1)^{l+1}e^{-i\varphi_{l, 0}}) \nonumber\\
  & &+O\left(\frac{1}{(n_0-l)^{3/2}}\right).\nonumber\label{eq.3.29}
\end{eqnarray}
Hence a simple calculus show
\begin{eqnarray}
  s_n^2&=& \sum_{l=1}^{n-1} E(\Delta\varphi_{l, 0})^2-E( E(\Delta\varphi_{l, 0} |\varphi_{l, 0}))^2\nn\\
  &=& \frac{4}{\beta}\log n+O(1)\rightarrow\infty
   \nn \label{eq.3.30}
\end{eqnarray}
and
\begin{eqnarray}
  &  &\frac{1}{s_n^2}\sum_{l=1}^{n-1}E\left(z_l^2| \varphi_{l-1, 0}\right)-1\\
  & &= \frac{1}{s_n^2}\sum_{l=1}^{n-1}\left(E (z_l^2| \varphi_{l-1, 0})- E z_l^2\right)\nonumber\\
  & &=\frac{1}{s_n^2}\sum_{l=1}^{n-1}\left(E ((\Delta\varphi_{l-1 , 0})^2| \varphi_{l-1, 0})- E (\Delta\varphi_{l-1 , 0})^2\right)\nonumber\\
  & & \quad+\frac{1}{s_n^2}\sum_{l=1}^{n-1}E\left(E ( \Delta\varphi_{l-1 , 0} | \varphi_{l-1, 0})\right)  ^2- \left(E ( \Delta\varphi_{l-1 , 0}  |\varphi_{l-1, 0}) \right)^2
  \nonumber\\
  & & \stackrel P\longrightarrow 0. \nn
  \nonumber
    \label{eq.3.30-1}
\end{eqnarray}
It also follows from (\ref{eq.2.15}) that
\begin{eqnarray}
 \sum_{l=1}^{n-1}E|\Delta\varphi_{l-1 , 0}|^3=O(1), \nn
\end{eqnarray}
which in turn immediately implies the martingale type Lindeberg condition (\ref{eq.3.26}).  Thus (\ref{eq.3.27}),  and so (\ref{eq.3.23}) holds. This concludes the proof of Theorem \ref{th.1.2}.

\vskip2mm
\noindent
{\bf Concluding remarks} To conclude the paper, we  remark that  in the Theorem \ref{th.1.1} we only proved the convergence to semicircle law in probability with the help of most basic Markov's inequality. It is natural  and interesting ro ask what the convergence rate is. In particular, for what errors $\varepsilon_n$ and $\delta_n$ it follows
\begin{eqnarray}
P\left(\left|\frac{N_n\left(x\sqrt n, x\sqrt n+\frac{t_n}{\sqrt n}\right)}{t_n}-\rho_{sc}(x)\right|>\varepsilon_n\right)\le \delta_n. \nn
\end{eqnarray}
In special cases $\beta=1,2$, even in the cases of general Wigner symmetric and Hermitian matrices, there have been a lot of research works around the optimal errors in the past few years.   See \cite{E2010} and references therein  for relevant survey.

We also remark that Theorem \ref{th.1.2} only concerned the number of positive states on the real line. It is expected that the Gaussian fluctuation hold  for other half interval like $[x\sqrt n, \infty)$, but we do not find a suitable proof. According to ($v'$),  we have a.s. for $0<l\le  \frac{(4- x^2)n}{4} -\frac{1}{2}$
$$
N_{n}\left(x\sqrt n, \quad \infty\right)=\sharp(({\varphi}_{l, 0}-{\varphi}_{l, 0}^\odot,\quad {\varphi}_{l, \infty}-{\varphi}_{l, \infty}^\odot]\cap 2\pi \mathbb{Z}).
$$
and
$$
\left|N_{n}\left(x\sqrt n, \quad \infty\right)-\frac{1}{2\pi}({\varphi}_{l, \infty}-{\varphi}_{l, \infty}^\odot-({\varphi}_{l, 0}-{\varphi}_{l, 0}^\odot))\right|\le 1.
$$
However, we lack a specific estimate like Lemma \ref{lm3.3} so that the classical central limit theorem for Markov chain (martingale) is not applicable.  This is left to the future job.

\end{document}